\DeclareSymbolFont{cyrletters}{OT2}{wncyr}{m}{n}
\DeclareMathSymbol{\Sha}{\mathalpha}{cyrletters}{"58}
\def\t{\tau}
\newcommand{\N}{\mathbb{N}} 
\newcommand{\NN}{\mathbb{N}} 
\newcommand{\Z}{\mathbb{Z}} 
\newcommand{\Q}{\mathbb{Q}}
\newcommand{\bz}{\textbf{z}}
\renewcommand{\pmod}[1]{\  \,  \left(  \mathrm{mod} \,  #1 \right)}
\def\sech{\mathop{\rm{sech}}}
\theoremstyle{plain}
\newtheorem{Thm}{Theorem}[section]
\newtheorem*{Thm*}{Theorem}
\newtheorem{corollary}[Thm]{Corollary}
\newtheorem{Conj}[Thm]{Conjecture}
\newtheorem{Lemma}[Thm]{Lemma}
\newtheorem*{Conj*}{Conjecture}
\theoremstyle{definition}
\newtheorem*{defn*}{Definition}
\theoremstyle{remark}
\newtheorem*{rmk*}{Remark}
\newtheorem*{remarks*}{Remarks}
\numberwithin{equation}{section}
\newtheorem{ind}[]{{\rm\it Indice}}
\renewcommand{\epsilon}{\varepsilon}
\title[Cranks, Stanton-type conjectures, and unimodality]{Infinite families of crank functions, Stanton-type conjectures, and unimodality}
\author[Bringmann]{Kathrin Bringmann}
\author[Gomez]{Kevin Gomez}
\author[Rolen]{Larry Rolen}
\author[Tripp]{Zack Tripp}
\address{University of Cologne, Department of Mathematics and Computer Science, Weyertal 86-90, 50931 Cologne, Germany}
\email{kbringma@math.uni-koeln.de}
\address{Department of Mathematics, Vanderbilt University, Nashville, TN 37240}
\email{kevin.j.gomez@vanderbilt.edu}
\email{larry.rolen@vanderbilt.edu}
\email[Corresponding author]{zachary.d.tripp@vanderbilt.edu}
\begin{document}
\numberwithin{equation}{section}

\maketitle

\begin{abstract}
Dyson's rank function and the Andrews--Garvan crank function famously give combinatorial witnesses for Ramanujan's partition function congruences modulo $5$, $7$, and $11$. While these functions can be used to show that the corresponding sets of partitions split into $5$, $7$, or $11$ equally sized sets, one may ask how to make the resulting bijections between partitions organized by rank or crank combinatorially explicit. Stanton recently made conjectures which aim to uncover a deeper combinatorial structure along these lines, where it turns out that minor modifications of the rank and crank are required. 
Here, we prove two of these conjectures.  We also provide abstract criteria for quotients of polynomials by certain cyclotomic polynomials to have non-negative coefficients based on unimodality and symmetry. Furthermore, we extend Stanton's conjecture to an infinite family of cranks. This suggests further applications to other combinatorial objects. We also discuss numerical evidence for our conjectures, connections with other analytic conjectures such as the distribution of partition ranks.
\end{abstract}

\section{Introduction and statement of results}

\subsection{Partition congruences and invariants}\label{Subsection: partition invariants}
Let $p(n)$ be the integer partition function, which counts the number of non-increasing sequences of positive integers that sum to $n\in\N_0$. Ramanujan discovered the following three congruences for the partition function $p(n)$
\begin{align*}
	p(5n+4) \equiv 0 \pmod{5},\quad p(7n+5) \equiv 0 \pmod{7},\quad p(11n+6) \equiv 0 \pmod{11},
\end{align*}
and gave $q$-series proofs of the first two congruences \cite{Ramanujan1920}. Based on an unpublished manuscript of Ramanujan, Hardy found proofs of all three \cite{Hardy1921}. Additionally, Ramanujan conjectured that these were the only congruences of the form $p(\ell n + \beta) \equiv 0 \pmod{\ell}$ for a prime $\ell$, which was later proved by Ahlgren and Boylan \cite{AhlgrenBoylan2003}. Since this time, there has been significant study of other congruences for the partition function \cite{Atkin1968, Ono2000, Ahlgren2000, Weaver2001, Johansson2012}.

In order to give a combinatorial explanation for Ramanujan's congruences, Dyson \cite{Dyson1944} defined the \textit{rank} of a partition to be its largest part minus its number of parts and conjectured that partitions of $5n+4$ (resp. $7n+5$) can be split into $5$ (resp. $7$) sets of equal size by considering the rank modulo $5$ (resp. $7$). This equidistribution of the rank modulo $5$ and $7$ was later proved by Atkin and Swinnerton-Dyer \cite{AtkinSwinnertonDyer1954}. Dyson also conjectured the existence of a partition statistic he called the crank of a partition that was equistributed modulo $11$. Andrews and Garvan \cite{AndrewsGarvan1988} found such a statistic which is equidistributed modulo $5$, $7$, and $11$. For a partition $\lambda$, we let $\ell(\lambda)$ be the largest part of $\lambda$, $\omega(\lambda)$ be the number of $1$'s in $\lambda$, and $\mu(\lambda)$ be the number of parts of $\lambda$ larger than $\omega(\lambda)$. The {\it crank} of $\lambda$ is then defined as
\begin{equation*}
	\mathrm{crank}(\lambda) := 
	\begin{cases}
		\ell(\lambda) & \text{if } \omega(\lambda) = 0,\\
		\mu(\lambda) - \omega(\lambda) & \text{if } \omega(\lambda) > 0.
	\end{cases}
\end{equation*}
In addition to giving a combinatorial explanation for Ramanujan's congruences, the rank and the crank give interesting examples in the theory of modular and mock modular forms. 
The first author and Ono \cite{BringmannOno2010} showed that the rank generating function \eqref{Eqn: rank gen fcn} is essentially a mock Jacobi form, while the crank generating function \eqref{Eqn: crank product} is essentially a meromorphic Jacobi form. Specializing the rank (resp. crank) generating function in the elliptic variable $z$ to a torsion point gives a mock modular (resp. modular) form.

\subsection{Cranks for colored partitions}

Since the discovery of Ramanujan's congruences, many papers have studied similar congruences for other partition related functions \cite{AlanaziMunagiSellers2016, Andrews1984, Mestrige2020}. One such example is the {\it $k$-colored partition} function $p_k(n)$ defined as the number of partitions of $n$ into $k$-colors or by the generating function 
\begin{equation*}
	\sum\limits_{n=0}^\infty p_k(n)q^n := \left( \sum\limits_{n=0}^\infty p(n)q^n \right)^k = \prod\limits_{n=1}^\infty \frac{1}{(1-q^n)^k}.
\end{equation*}
Boylan \cite{Boylan2004} and Dawsey and Wagner \cite{DawseyWagner2017} have proved a number of congruences for $p_k(n)$ using the theory of CM forms and have given partial progress towards classifying all such congruences. Boylan classified all congruences of the form $p_k(\ell n + \beta) \equiv 0 \pmod{\ell}$ for a prime $\ell$ and $k \le 47$ odd and found that all but three such congruences were explained by the following result or by other well-known families of congruences. 
\begin{Thm}[\hspace*{-.15cm}\cite{Boylan2004, DawseyWagner2017}]\label{Thm: k-colored congruences}
	Let $k + h = \ell t$ for a prime $\ell$ and positive integers $h$ and $t$, and let $\delta_{k,\ell}\in\Z$ be such that $24\delta_{k,\ell} \equiv k \pmod{\ell}$. Then we have the Ramanujan-type congruence
	\begin{equation*}
		p_k(\ell n + \delta_{k,\ell})\equiv 0 \pmod{\ell}
	\end{equation*}
	if any of the following hold:
	\begin{enumerate}[leftmargin=*,label=\textnormal{(\arabic*)}]
		\item We have $h \in \{4,8,14\}$ and $\ell \equiv 2 \pmod{3}$. 
		\item We have $h \in \{6,10\}$ and $\ell \equiv 3\pmod{4}$. 
		\item We have $h = 26$ and $\ell \equiv 11 \pmod{12}$. 
	\end{enumerate}
\end{Thm}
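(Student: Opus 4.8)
The plan is to reduce the congruence to the vanishing (modulo $\ell$) of the Fourier coefficients of the eta power $\eta(\tau)^h$ in a fixed arithmetic progression, and then to exploit the fact that for the listed values of $h$ this power is lacunary with complex multiplication.

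First I would pass to generating functions and reduce modulo $\ell$. Since $(1-q^n)^\ell \equiv 1-q^{\ell n}\pmod{\ell}$, we have $(q;q)_\infty^\ell \equiv (q^\ell;q^\ell)_\infty \pmod{\ell}$, and using $k=\ell t-h$ this yields
\[
\sum_{n\ge 0}p_k(n)q^n=(q;q)_\infty^{-k}\equiv (q;q)_\infty^{h}\,(q^\ell;q^\ell)_\infty^{-t}\pmod{\ell}.
\]
The factor $(q^\ell;q^\ell)_\infty^{-t}$ is a power series in $q^\ell$, so modulo $\ell$ the coefficient $p_k(N)$ for $N\equiv\delta_{k,\ell}\pmod{\ell}$ is a $\Z/\ell\Z$-linear combination of the coefficients $A(m):=[q^m](q;q)_\infty^h$ with $m\equiv\delta_{k,\ell}\pmod{\ell}$. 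Hence it suffices to show $A(m)\equiv 0\pmod{\ell}$ whenever $m\equiv\delta_{k,\ell}\pmod{\ell}$.

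Next I would repackage this as a statement about a genuine modular form. Writing $F(\tau):=\eta(24\tau)^h=\sum_M C(M)q^M$, a holomorphic form of integer weight $w=h/2$ on a congruence subgroup of level dividing $576$, supported on exponents $M\equiv h\pmod{24}$, we have $C(24m+h)=A(m)$. Because $24\delta_{k,\ell}\equiv k\equiv -h\pmod{\ell}$, the progression $m\equiv\delta_{k,\ell}\pmod{\ell}$ corresponds exactly to $M=24m+h\equiv 0\pmod{\ell}$. Thus the task becomes: show $C(M)\equiv 0\pmod{\ell}$ for every $M$ divisible by $\ell$.

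This is where the CM structure enters, and it is the crux. By Serre's theorem on the lacunarity of powers of $\eta$, for $h\in\{4,6,8,10,14,26\}$ the form $F$ is a linear combination of Hecke eigenforms with complex multiplication: by $\Q(\sqrt{-3})$ when $h\in\{4,8,14\}$, by $\Q(i)$ when $h\in\{6,10\}$, and with components from both fields when $h=26$. For a CM eigenform attached to an imaginary quadratic field $K$ and a prime $\ell$ inert in $K$ and coprime to the level, the Hecke eigenvalue at $\ell$ vanishes; multiplicativity then shows that the $M$-th coefficient vanishes when $v_\ell(M)$ is odd and is divisible by $\ell^{w-1}$ when $v_\ell(M)$ is even and positive. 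Since each listed $h$ gives $w=h/2\ge 2$, in either case the coefficient is $\equiv 0\pmod{\ell}$ for all $\ell\mid M$ (the finitely many small primes dividing the level may be checked directly). Finally, the inertness conditions translate into exactly the hypotheses of the theorem: $\ell$ is inert in $\Q(\sqrt{-3})$ iff $\ell\equiv 2\pmod{3}$, inert in $\Q(i)$ iff $\ell\equiv 3\pmod{4}$, and inert in both iff $\ell\equiv 11\pmod{12}$, matching cases (1), (2), and (3) respectively. The main obstacle is precisely this CM input, namely identifying the relevant fields and, for $h=26$, the decomposition forcing inertness in both $\Q(\sqrt{-3})$ and $\Q(i)$; the reduction and the inert-prime computation are formal by comparison.
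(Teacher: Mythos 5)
The paper does not actually prove this theorem: it is quoted from Boylan \cite{Boylan2004} and Dawsey--Wagner \cite{DawseyWagner2017}, whose proofs (as the paper itself remarks) rely on the theory of CM forms. Your proposal is essentially that same standard argument --- the Frobenius reduction $(q;q)_\infty^{-k}\equiv (q;q)_\infty^{h}\,(q^{\ell};q^{\ell})_\infty^{-t}\pmod{\ell}$, the passage to the coefficients of $\eta(24\tau)^h$ supported on $M\equiv 0\pmod{\ell}$, and Serre's lacunarity/CM decomposition with vanishing at inert primes, with the fields $\Q(\sqrt{-3})$, $\Q(i)$, and both matching cases (1), (2), (3) --- and is correct in outline; in a full write-up you would only need to make explicit that $\ell\ge 5$ (implicit in the statement, which degenerates for $\ell\in\{2,3\}$) and that the scalars appearing in Serre's explicit eigenform decompositions are $\ell$-integral.
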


Analogous to the rank and crank for $p(n)$, various statistics for $k$-colored partitions have been given. Hammond and Lewis \cite{HammondLewis2004} defined the birank for $2$-colored partitions to explain congruences modulo $5$, while Andrews \cite{Andrews2008} gave a combinatorial interpretation of a certain $2$-color congruence using the ordinary crank. Garvan \cite{Garvan2009} was later able to provide extensions of both of these results in order to explain a certain infinite family of congruences for $k$-colored partitions. More recently, Wagner, the third, and the fourth author \cite{RolenTrippWagner2020} have found two infinite families of cranks that together explain most known congruences for $k$-colored partitions. The generating function for these cranks are defined by certain products of the ordinary crank generating function. The shape of the crank (see \eqref{Eqn: C_k}) is defined in such a way in order to utilize the theory of theta blocks set forth by Gritsenko, Skoruppa, and Zagier \cite{GritsenkoSkoruppaZagier2019}. In \cite{RolenTrippWagner2020}, cranks of this form were multiplied by $1 = \frac{\theta_R}{\theta_R}$ for a given theta block $\theta_R$ depending on the congruence in order to apply a set of sum-to-product identities known as the Macdonald identities to the numerator and denominator separately. This allowed the authors to prove equidistribution in an infinite family of cases. 

\subsection{Stanton's Conjectures}\label{Subsection: Stanton's Conjectures}

While the rank and crank distribute partitions into congruence classes of equal size in order to explain Ramanujan's congruences, there is no known direct map between these equinumerous classes. The search for such a map led to a conjecture of Stanton. In order to state this conjecture, we need to modify the Laurent polynomials $\text{rank}_n(\zeta)$ and $\text{crank}_n(\zeta)$, which are defined in \eqref{Eqn: rank gen fcn} and \eqref{Eqn: crank product}, respectively. 

\begin{defn*}
	For $n\in\N_0$, the \textit{modified rank} and \textit{modified crank} are defined by
	\begin{align}
		\text{rank}_{\ell,n}^*(\zeta) &:= \text{rank}_{\ell n+\beta}(\zeta) + \zeta^{\ell n+\beta-2} - \zeta^{\ell n+\beta-1} + \zeta^{2-\ell n-\beta} - \zeta^{1-\ell n -\beta},\label{Eqn: modified rank def}\\
		\text{crank}_{\ell,n}^*(\zeta) &:= \text{crank}_{\ell n+\beta}(\zeta) + \zeta^{\ell n+\beta-\ell} - \zeta^{\ell n +\beta} + \zeta^{\ell-\ell n-\beta} - \zeta^{-\ell n-\beta},\label{Eqn: modified crank def}
	\end{align}
	where $\beta:=\ell-\frac{\ell^2-1}{24}$. 
\end{defn*}

 %There is an easy to state combinatorial interpretation given by modifying the usual rank and crank for the partitions $n$ and $1 + \ldots + 1$, but we are primarily interested in the modified Laurent polynomials given above. 
\begin{rmk*}
Note that these modifications only change the definition of rank and crank for the partitions $n$ and $1+\ldots+1$. For instance, in the case of rank, this assigns the partition $n$ the value $n-2$, although the classical rank assigns it the value $n-1$. 
\end{rmk*}

We see in Lemma~\ref{Lem:equidistribution} below that the explanation of Ramanujan's congruences for $p(n)$ by ranks and cranks is equivalent to the divisibility of rank and crank polynomials by cyclotomic polynomials. Stanton found that the quotients of these rank and crank polynomials by cyclotomic polynomials do not have positive coefficients. The modifications are designed to fix positivity, with the eventual goal of uncovering new combinatorial structure of what these positive coefficients count. Such an interpretation would hopefully yield a map between the congruence classes for the rank and crank.  As we see in Lemma~\ref{Lem: unimodality} below, this positivity is related to unimodality of coefficients. Stanton's modifications essentially fix this unimodality and maintain divisibility by cyclotomic polynomials.

 We are now able to state Stanton's conjecture, which was given in his unpublished notes. Here and throughout the paper, $\Phi_\ell(\zeta):= 1 + \zeta + \ldots + \zeta^{\ell -1}$ denotes the {\it $\ell$-th cyclotomic polynomial} and $\zeta_\ell:=e^{\frac{2\pi i}{\ell}}$.

\begin{Conj}[Stanton]\label{Conj: Stanton} Let $n \in \NN_0$.
	\begin{enumerate}[leftmargin=*,label=\textnormal{(\arabic*)}]
		\item The following are Laurent polynomials with non-negative coefficients:
		\begin{equation*}
			\frac{\mathrm{rank}_{5,n}^*(\zeta)}{\Phi_5(\zeta)} \quad \text{and} \quad \frac{\mathrm{rank}_{7,n}^*(\zeta)}{\Phi_7(\zeta)}.
		\end{equation*}
		\item The following is a Laurent polynomial with positive coefficients:
		\begin{equation*}
			\frac{\mathrm{crank}_{5n+4}(\zeta)}{\Phi_5(\zeta^2)}.
		\end{equation*}
		\item The following are Laurent polynomials with non-negative coefficients: 
		\begin{equation*}
			\frac{\mathrm{crank}_{5,n}^*(\zeta)}{\Phi_5(\zeta)}, \quad \frac{\mathrm{crank}_{7,n}^*(\zeta)}{\Phi_7(\zeta)}, \text{and} \quad \frac{\mathrm{crank}_{11,n}^*(\zeta)}{\Phi_{11}(\zeta)}.
		\end{equation*}
	\end{enumerate}
\end{Conj}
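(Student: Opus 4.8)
The plan is to deduce all three parts from a single structural principle: a symmetric Laurent polynomial with unimodal, non-negative coefficients that is divisible by $\Phi_\ell$ automatically has a quotient with non-negative coefficients. This is the content I would isolate as the key lemma (cf.\ Lemma~\ref{Lem: unimodality}), and I would prove it as follows. Write $P(\zeta)=\sum_k a_k\zeta^k$ with $a_k=a_{-k}$ and set $Q:=P/\Phi_\ell=\sum_k b_k\zeta^k$. Multiplying $Q\,\Phi_\ell=P$ by $(\zeta-1)$ and using $(\zeta-1)\Phi_\ell(\zeta)=\zeta^\ell-1$ gives the recurrence $b_k-b_{k-\ell}=a_k-a_{k-1}=:c_k$. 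Since $P$ has finite support, iterating downward yields $b_k=\sum_{j\le k,\ j\equiv k\,(\ell)} c_j$; that is, $b_k$ is the partial sum, within a fixed residue class modulo $\ell$, of the first differences $c_j$. Symmetric unimodality forces $c_j\ge 0$ for $j\le 0$ and $c_j\le 0$ for $j\ge 1$, so along each residue class these partial sums are non-decreasing and then non-increasing, hence bounded below by the smaller of their two endpoints. The bottom endpoint is $0$ by finite support, and the top endpoint is the full class sum $S_r-S_{r-1}$, where $S_r:=\sum_{j\equiv r} a_j$. Finally, divisibility by $\Phi_\ell$ is exactly the statement $P(\zeta_\ell^t)=0$ for $t\not\equiv 0$, which by Lemma~\ref{Lem:equidistribution} is equivalent to equidistribution $S_0=\cdots=S_{\ell-1}$; hence every class sum vanishes, both endpoints are $0$, and each $b_k\ge 0$.

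With this criterion in hand, I would reduce each part to verifying three properties of the relevant modified polynomial: symmetry, divisibility by the stated cyclotomic factor, and unimodality. Symmetry is immediate, since the rank and crank counts are invariant under conjugation ($N(m,n)=N(-m,n)$ and $M(m,n)=M(-m,n)$) and Stanton's correction terms in \eqref{Eqn: modified rank def}--\eqref{Eqn: modified crank def} are themselves symmetric. Divisibility splits into the unmodified part, which vanishes at $\zeta_\ell^t$ by equidistribution (Lemma~\ref{Lem:equidistribution}), and the correction, which I would check directly at a primitive $\ell$-th root. For the crank the correction vanishes identically there because $\zeta_\ell^{\pm\ell}=1$; for the rank a short computation shows the correction equals $(1-\zeta_\ell^t)\big(\zeta_\ell^{t(\beta-2)}-\zeta_\ell^{-t(\beta-1)}\big)$, which vanishes precisely when $2\beta\equiv 3\pmod{\ell}$ --- and this congruence holds for the two cases $\ell=5$ and $\ell=7$ that appear in part (1).

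The main obstacle is unimodality of the modified polynomials, and I would attack it by separating the boundary from the interior. At the boundary the mechanism is transparent and finite: the extreme coefficients of $\mathrm{rank}_N$ follow the non-unimodal pattern $\ldots,\,1,\,0,\,1$ at the indices $N-3,N-2,N-1$ (coming from the partitions $(N-1,1)$, no partition, and $(N)$, respectively), and Stanton's modification is engineered to move the end mass inward by exactly one step, converting this to the monotone pattern $\ldots,\,1,\,1,\,0$; the crank correction plays the identical role with a shift of $\ell$. These amount to explicit evaluations of extreme rank and crank counts. The genuinely hard input is interior unimodality, namely $N(m,n)\ge N(m+1,n)$ and $M(m,n)\ge M(m+1,n)$ for $0\le m$ below the boundary band, uniformly along the arithmetic progressions $n=\ell n'+\beta$. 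I would establish this through the asymptotic distribution of the rank and crank: using Rademacher-type exact formulas or the saddle-point analysis of the relevant (mock) Jacobi forms, one shows that for large $n$ the coefficients $N(m,n),M(m,n)$ track a smooth, strictly unimodal density in $m$, forcing monotonicity off the peak, while the finitely many small $n$ are dispatched by direct computation. This analytic step, tying the conjecture to the limiting distribution of partition ranks, is where essentially all of the difficulty lies.

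Finally, part (2) requires two extra ingredients beyond the template above. First, the denominator factors as $\Phi_5(\zeta^2)=\Phi_5(\zeta)\Phi_{10}(\zeta)$, so in addition to equidistribution modulo $5$ I would need $\mathrm{crank}_{5n+4}$ to vanish at primitive tenth roots of unity; I would obtain this divisibility by $\Phi_{10}$ from a dissection of the crank product $\frac{(q;q)_\infty}{(\zeta q;q)_\infty(\zeta^{-1}q;q)_\infty}$ specialized at $\zeta=\zeta_{10}$. Second, the claimed positivity is strict, so in place of unimodality I would need strict unimodality on the relevant support, which the same asymptotic analysis supplies for large $n$ together with a finite check. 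Granting these, positivity of the quotient follows from the same partial-sum argument as in the key lemma, now carried out with strict inequalities.
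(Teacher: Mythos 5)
Your skeleton is the same as the paper's: the key lemma you isolate is precisely Lemma~\ref{Lem: unimodality} (with the two-step variant, Lemma~\ref{Lem: modified unimodality}, implicitly needed for part (2)), your partial-sum proof of it along residue classes is essentially the paper's proof, and your symmetry and divisibility checks (including the observation that the rank correction vanishes at $\zeta_\ell$ because $2\beta\equiv 3 \pmod{\ell}$ for $\ell\in\{5,7\}$) are correct and match the paper. The genuine gap is the unimodality input, and it is fatal for part (1): the interior inequality $N(m,n)\ge N(m+1,n)$ is exactly Conjecture~\ref{Conj: rank unimodality}, which is \emph{open}. The paper does not prove part (1); it proves only the implication ``Conjecture~\ref{Conj: rank unimodality} implies part (1)'' (Theorem~\ref{Thm: rank conjecture}). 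Your plan to supply this input by Rademacher/saddle-point analysis does not work with current technology: the Dousse--Mertens asymptotic controls $N(m,n)$ only in the range $|m|\le\frac{\sqrt{n}\log(n)}{\pi\sqrt{6}}$, and the qualitative ``true for $n$ sufficiently large'' statements are not effective, so there is no explicit threshold past which asymptotics take over and hence no finite computation that could close the argument. Presenting this step as routine analysis conceals that it is the central open difficulty.

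For parts (2) and (3) the needed inequalities are not open, but your treatment has two further defects. The interior monotonicity $M(m,n)\ge M(m+1,n)$ (and the two-step version needed for part (2)) is a known, hard theorem of Ji and Zang (Theorem~\ref{Thm: crank unimodality}); it should be cited rather than re-derived by the same insufficient asymptotic argument --- with that citation this portion becomes repairable. More substantively, your boundary analysis for part (3) is wrong: the crank correction does not play ``the identical role with a shift of $\ell$.'' It adds $+1$ to the coefficient of $\zeta^{\ell n+\beta-\ell}$, which lies $\ell$ steps inside the support, where the coefficients are no longer $0$'s and $1$'s; unimodality of $\mathrm{crank}^*_{\ell,n}$ at that index requires the \emph{strict} inequality $M(\ell n+\beta-\ell-1,\ell n+\beta)\ge M(\ell n+\beta-\ell,\ell n+\beta)+1$, which Ji--Zang does not give. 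The paper closes this by showing $M(n-k,n)$ is constant in $n$ for $n\ge 2k$ (Lemma~\ref{Lem: constant crank}) and then verifying a single value by computer; your sketch has no substitute for this. Finally, in part (2): strict positivity is actually false (the quotients have zero coefficients interlaced with positive ones, which is why the paper reads ``positive'' as ``non-negative''), so aiming for strict inequalities is chasing a false statement; and the divisibility by $\Phi_5(-\zeta)$ should be obtained from Garvan's mod-$10$ identities $M(2k+j,10;5n+4)=\frac{1}{5}M(j,2;5n+4)$ (Theorem~\ref{Thm: crank mod 10}) via Lemma~\ref{Lem: modified equidistribution}, rather than from an unspecified dissection of the crank product.
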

\begin{rmk*}
	In Conjecture~\ref{Conj: Stanton} (2), Stanton used the term ``positive'' instead of ``non-negative''. However, given that the authors found that all examples of the given Laurent polynomial in this case have $0$'s between positive coefficients, we assume that ``Laurent polynomial with positive coefficients'' was meant to be the same as ``Laurent polynomial with non-negative coefficients''.
\end{rmk*}

Stanton also stated a related conjecture of Garvan for the $5$-core crank \cite{GarvanKimStanton1990} that cannot be proven by the methods we give here due to the fact that the $5$-core crank does not appear to be unimodal.  A Laurent polynomial $f(\zeta) = \sum_{m = -M}^N a_m \zeta^m$ is said to be \textit{unimodal} if there exists $k \in\Z$ such that $a_m \le a_{m+1}$ for $m \le k$ and $a_m \ge a_{m+1}$ for $m >k$. Our key to proving Conjecture~\ref{Conj: Stanton} is to use the fact that the modified rank and crank given in Conjecture~\ref{Conj: Stanton} are unimodal.  %The motivation for the above conjecture can be seen in the fact that divisibility by $\Phi_\ell(\zeta)$ is equivalent to equidistribution modulo $\ell$ (see Section~\ref{Section: prelim}), and the non-negativity of the resulting Laurent polynomial hints at a combinatorial interpretation of the coefficients. 

\subsection{Results}

\noindent It turns out that the modification Stanton gives for the crank along with known inequalities for the crank are sufficient to prove the following. 

\begin{Thm}\label{Thm: crank conjectures}
	Parts \textnormal{(2)} and \textnormal{(3)} of Conjecture~\ref{Conj: Stanton} are true.
\end{Thm}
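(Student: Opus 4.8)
The plan is to reduce both parts to a single structural principle: a symmetric Laurent polynomial that is unimodal and divisible by a cyclotomic polynomial has a quotient with non-negative coefficients, which is exactly the content of Lemma~\ref{Lem: unimodality}. Thus for each polynomial appearing in the statement I must verify three things: symmetry, divisibility by the relevant cyclotomic factor, and unimodality. Symmetry is immediate, since $M(m,n)=M(-m,n)$ makes $\mathrm{crank}_N(\zeta)$ a palindromic Laurent polynomial and the extra terms in \eqref{Eqn: modified crank def} are added in symmetric pairs $\zeta^{\pm(N-\ell)}$ and $\zeta^{\pm N}$, where $N:=\ell n+\beta$. Divisibility comes from Lemma~\ref{Lem:equidistribution}: equidistribution of the crank on $\ell n+\beta$ gives $\Phi_\ell(\zeta)\mid \mathrm{crank}_N(\zeta)$, and each modification term is itself a multiple of $\Phi_\ell$, since $\zeta^{N-\ell}-\zeta^{N}=-\zeta^{N-\ell}(1-\zeta)\Phi_\ell(\zeta)$ and likewise for the conjugate pair, so $\mathrm{crank}^*_{\ell,n}$ remains $\Phi_\ell$-divisible. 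For Part~(2) I use the factorization $\Phi_5(\zeta^2)=\Phi_5(\zeta)\Phi_{10}(\zeta)$, where the extra divisibility by $\Phi_{10}(\zeta)$ reduces to the vanishing of $\mathrm{crank}_{5n+4}$ at primitive tenth roots of unity. The crux, and the only genuinely new ingredient, is unimodality.

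For unimodality I would argue in two regions. Away from the two extreme exponents, unimodality of the coefficient sequence $M(m,N)$ follows from the known monotonicity inequalities for the crank, namely $M(m,N)\ge M(m+1,N)$ for $m$ in the interior, with the finitely many small $n$ checked directly. The failure of unimodality occurs only at the very top and, symmetrically, the very bottom: the partition $(N)$ is the unique partition of crank $N$, while no partition of $N$ has crank $N-1$, so the sequence reads $\dots, M(N-2)=1,\, M(N-1)=0,\, M(N)=1$, which is not unimodal. This single boundary defect is where the known interior inequality breaks.

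Repairing this defect is precisely the role of Stanton's modification, and is the heart of Part~(3). Removing $\zeta^{\pm N}$ and adding $\zeta^{\pm(N-\ell)}$ reassigns the partitions $(N)$ and $1+\cdots+1$ from crank $\pm N$ to crank $\pm(N-\ell)$; this kills the isolated top coefficient together with the adjacent zero, and the $+1$ it deposits at $N-\ell$ is absorbed by the growth of $M(m,N)$ toward the center, since for exponents $N-j$ with $j$ small, $M(N-j,N)$ counts partitions of $j$ into parts $\ge 2$ and is non-decreasing in $j$. One then checks that the resulting symmetric sequence is globally unimodal and applies Lemma~\ref{Lem: unimodality}. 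For Part~(2) the same defect is handled differently: dividing by $\Phi_5(\zeta^2)$ decouples $\mathrm{crank}_{5n+4}(\zeta)$ into its even- and odd-indexed parts, each a polynomial in $w=\zeta^2$. The troublesome zero at $N-1$ lies in the opposite parity class from the top coefficient at $N$, so within each class it appears only as a harmless leading zero and does not break unimodality; each class is symmetric, unimodal, and $\Phi_5(w)$-divisible, so Lemma~\ref{Lem: unimodality} applies to each and yields Part~(2).

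I expect the main obstacle to be making the unimodality argument uniform: pinning down the exact threshold beyond which the known crank inequalities guarantee interior monotonicity (in particular near the center, where $M(0,N)\ge M(1,N)$ is delicate), and verifying rigorously that after Stanton's modification the repaired boundary region, especially the coefficient at $N-\ell$ where the reassigned partition lands, does not create a new local bump. The remaining finitely many small values of $n$, together with the divisibility input by $\Phi_{10}$ in Part~(2), can be dispatched by direct computation.
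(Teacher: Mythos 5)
Your overall strategy---symmetry plus cyclotomic divisibility plus unimodality, fed into Lemma~\ref{Lem: unimodality}---is exactly the paper's, and your treatment of part (3) matches it almost step for step: symmetry from $M(m,n)=M(-m,n)$, divisibility from crank equidistribution (Lemma~\ref{Lem:equidistribution}) together with the factorization of the modification terms, interior monotonicity from Ji--Zang (Theorem~\ref{Thm: crank unimodality}), and a boundary analysis at the top. The one genuine gap is in part (2). You correctly isolate the new divisibility content as the vanishing of $\mathrm{crank}_{5n+4}(\zeta)$ at primitive tenth roots of unity (via the factorization $\Phi_5(\zeta^2)=\Phi_5(\zeta)\Phi_{10}(\zeta)$), but you then lump this together with the ``finitely many small values of $n$'' as something that ``can be dispatched by direct computation.'' It cannot: it is one identity for \emph{every} $n$, hence an infinite family of statements. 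In your parity language, writing $\mathrm{crank}_{5n+4}(\zeta)=E(\zeta^2)+\zeta\,O(\zeta^2)$, the required divisibility of $E(w)$ and $O(w)$ by $\Phi_5(w)$ amounts to $M(0,10;5n+4)=M(2,10;5n+4)=\cdots=M(8,10;5n+4)$ and $M(1,10;5n+4)=\cdots=M(9,10;5n+4)$, which is strictly stronger than mod-$5$ equidistribution (the latter only forces $M(r,10;N)+M(r+5,10;N)$ to be independent of $r$). The paper supplies exactly this missing input with Garvan's mod-$10$ theorem \cite{Garvan1990} (Theorem~\ref{Thm: crank mod 10}), namely $M(2k+j,10;5n+4)=\frac15 M(j,2;5n+4)$, combined with Lemma~\ref{Lem: modified equidistribution}; without citing or reproving such a result, your part (2) is unproven.

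Two smaller points. First, in part (3) your replacement for the paper's handling of the delicate coefficient at $N-\ell$ is attractive: identifying $M(N-j,N)$ with the number of partitions of $j$ into parts $\ge 2$ (valid for $j<N/2$) is an explicit form of the paper's Lemma~\ref{Lem: constant crank} (the paper instead invokes that lemma and a computer check). But ``non-decreasing in $j$'' is not sufficient: you must absorb the deposited $+1$, i.e.\ prove $M(N-\ell-1,N)\ge M(N-\ell,N)+1$, and the sequence $1,0,1,1,2,2,4,4,7,\dots$ has repeats, so a strict increase is not automatic; it does hold for $\ell\in\{5,7,11\}$ (the counts jump $2\to4$, $4\to7$, $12\to21$) but must be checked as such. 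Second, in part (2) the odd-indexed piece $O(w)$ is not symmetric in the sense Lemma~\ref{Lem: unimodality} requires: its coefficients satisfy $c_k=c_{-k-1}$ (symmetry about $-\frac12$), not $c_k=c_{-k}$, so that lemma does not apply to it verbatim. You would need a variant---which is precisely what the paper proves as Lemma~\ref{Lem: modified unimodality}, handling both parity classes at once through the step-two inequality $[\zeta^{m-1}]f(\zeta)\ge[\zeta^{m+1}]f(\zeta)$.
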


\noindent The analogous inequalities for rank are not known, and the authors are unaware of a reference in the literature for such a conjecture, which we give here.

\begin{Conj}\label{Conj: rank unimodality}
	We have $N(m,n) \ge N(m+1, n)$ for $0 \le m \le n-2$ and $n \ge 39$. 
\end{Conj}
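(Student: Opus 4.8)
The plan is to attack Conjecture~\ref{Conj: rank unimodality} through effective asymptotics for the rank counting function $N(m,n)$, exploiting the fact (anticipated in the discussion preceding the conjecture) that the relevant polynomials are governed by unimodality. By the symmetry $N(m,n)=N(-m,n)$ it suffices to treat $m\ge 0$, and the heart of the matter is that, after normalizing, the rank distribution tends to the logistic profile: for an explicit scale $\beta=\beta(n)\asymp n^{-1/2}$ (conjecturally $\beta=\pi/\sqrt{6n}$) one expects
\[
N(m,n)=\frac{\beta}{4}\,\sech^2\!\left(\frac{\beta m}{2}\right)p(n)\bigl(1+o(1)\bigr)
\]
uniformly for $m$ in a suitable range, and the profile $x\mapsto\sech^2(x)$ is even and strictly decreasing for $x>0$. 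First I would make this precise by deriving a Rademacher-type expansion for $N(m,n)$ from the mock modularity of the two-variable rank generating function $R(\zeta;q)=\sum_{n,m}N(m,n)\zeta^m q^n$, extracting $N(m,n)$ via the Fourier integral $N(m,n)=\frac{1}{2\pi}\int_{-\pi}^{\pi}[q^n]R(e^{i\phi};q)\,e^{-im\phi}\,d\phi$ and running a two-dimensional saddle-point analysis, with one saddle at the dominant cusp $q\to 1$ and one in the Fourier variable $\phi$.

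The key steps would then be: (i) isolate the main term $M(m,n)$ of $N(m,n)$ and show $M(m,n)-M(m+1,n)>0$ for all admissible $m\ge0$, which follows because the discrete difference of $\sech^2(\beta\,\cdot/2)$ is positive there and of size $\asymp\beta^2 p(n)\asymp p(n)/n$; (ii) bound the error $N(m,n)-M(m,n)$ uniformly in $m$ with an explicit constant; and (iii) combine these to conclude $N(m,n)-N(m+1,n)>0$ once $n$ exceeds an explicit threshold $n_0$, with the finitely many cases $39\le n<n_0$ verified by direct computation so as to match the numerical threshold motivating the stated bound $n\ge 39$. Because the conjectured inequality is a genuinely \emph{second-order} phenomenon---the difference is of relative size $1/n$ against the main growth---step~(ii) must deliver the individual asymptotics to relative precision better than $n^{-1}$, uniformly in $m$. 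This is the principal analytic obstacle, and it is what forces a refined, effective Rademacher expansion carried to sufficiently many terms rather than a bare leading-order estimate.

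The saddle-point argument is tailored to the bulk $m=O(\sqrt{n}\log n)$, where the $\sech^2$ profile dominates; in the tail $m\asymp n$ one must argue separately, and here I would switch to elementary combinatorics, since partitions counted by $N(m,n)$ with $m$ large have very few parts and a dominant largest part, so the counts can be enumerated directly and their monotonicity checked by hand. The delicate endpoint is the extreme top of the spectrum: one has $N(n-1,n)=1$ while $N(n-2,n)=0$, so the clean inequality genuinely requires keeping the comparison away from the top two values (equivalently, passing to Stanton's modified rank $\mathrm{rank}^{*}_{\ell,n}$, whose modification is precisely designed to repair this endpoint). I expect the transition region between bulk and tail, where neither the saddle-point asymptotics nor the naive enumeration is cleanly in control, to be the most technically demanding part of assembling a complete and uniform proof.
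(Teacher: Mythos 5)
Your target here is Conjecture~\ref{Conj: rank unimodality}, which the paper does \emph{not} prove: the authors state explicitly that it ``appears to be out of reach with current methods,'' cite only the Dousse--Mertens asymptotic as partial progress, and use the conjecture purely as a hypothesis in Theorem~\ref{Thm: rank conjecture}. So there is no proof in the paper to compare against, and your text has to stand on its own as a proof. It does not: it is a research program whose decisive step is named but never carried out. The gap is your step (ii). The quantity to be detected, $N(m,n)-N(m+1,n)$, has relative size roughly $\gamma^2(m+1)\asymp (m+1)/n$ near the center of the distribution (and never exceeds $O(n^{-1/2})$ in the bulk), where $\gamma=\frac{\pi}{\sqrt{6n}}$ is the scale you call $\beta$. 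The only known uniform asymptotic --- the Dousse--Mertens formula quoted in the paper --- carries a relative error $O\big(\gamma^{\frac12}|m|^{\frac13}\big)$, which for bounded $m$ is $\asymp n^{-1/4}$, i.e.\ larger than the target by a factor of order $n^{3/4}$. Appealing to ``a refined, effective Rademacher expansion carried to sufficiently many terms'' does not close this: $\mathcal{R}(z;\tau)$ is mock modular, exact Rademacher-type formulas exist when $z$ is specialized to torsion points (Bringmann--Ono), but your Fourier-inversion extraction of an individual $N(m,n)$ requires controlling $[q^n]\mathcal{R}(z;\tau)$ uniformly over the whole circle in $z$ to relative precision $o(1/n)$, and no existing technique --- including the Bringmann--Dousse circle-method machinery underlying Dousse--Mertens --- comes close. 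This is precisely why the conjecture is open. The bulk-to-tail transition region, which you yourself flag as ``the most technically demanding part,'' is likewise left entirely unaddressed, and your claim that the main-term difference has size $\asymp\gamma^2 p(n)$ is off for small $m$ (there it is $\asymp\gamma^3 (m+1) p(n)$), which only sharpens the precision demands.

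One further point, which is actually in your favor: as literally stated the conjecture is false at the endpoint, since for $m=n-2$ it asserts $N(n-2,n)\ge N(n-1,n)$, i.e.\ $0\ge 1$. You essentially observe this (``the clean inequality genuinely requires keeping the comparison away from the top two values''), and any correct formulation must stop at $m\le n-3$; this is consistent with the paper's own use of the hypothesis, since the proof of Theorem~\ref{Thm: rank conjecture} invokes it only for $m\le \ell n+\beta-4$ and handles the top coefficients by hand via the modified rank. In summary: correct skeleton, correctly identified obstacles, but the analytic input that would constitute the proof is missing and is not supplied by any method you cite or sketch.
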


Unfortunately, Conjecture~\ref{Conj: rank unimodality} appears to be out of reach with current methods, though there is strong computational evidence and partial progress towards it. The claim is known to be true for $n$ sufficiently large, and for fixed $m$, this can be made explicit. Additionally, Dousse and Mertens \cite{DousseMertens2015} used methods of Dousse and the first author \cite{BringmannDousse2016} to show that for $|m|\le\frac{\sqrt{n}\log(n)}{\pi\sqrt{6}}$, we have as $n\rightarrow\infty$,
\[
N(m,n) = \frac{\gamma}{4} \sech\hspace{-.075cm}^2\left(\frac{\gamma m}{2}\right) p(n) \left(1+O\left(\gamma^{\frac 12} |m|^{\frac 13}\right)\right),
\]
where $\gamma:=\frac{\pi}{\sqrt{6n}}$. Note that $\sech$ is decreasing, so this gives the claim asymptotically. If we assume Conjecture \ref{Conj: rank unimodality}, we are able to prove another part of Conjecture~\ref{Conj: Stanton}.

\begin{Thm}\label{Thm: rank conjecture}
	Conjecture~\ref{Conj: rank unimodality} implies part \textnormal{(1)} of Conjecture~\ref{Conj: Stanton}.
\end{Thm}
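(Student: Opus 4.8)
The plan is to reduce the statement to the abstract positivity criterion of Lemma~\ref{Lem: unimodality}: it suffices to show that, for $\ell\in\{5,7\}$ and every $n\in\N_0$, the modified rank polynomial $\mathrm{rank}_{\ell,n}^*(\zeta)$ is (i) symmetric, (ii) divisible by $\Phi_\ell(\zeta)$, and (iii) unimodal. Once these three properties are in hand, Lemma~\ref{Lem: unimodality} immediately yields that $\mathrm{rank}_{\ell,n}^*(\zeta)/\Phi_\ell(\zeta)$ is a Laurent polynomial with non-negative coefficients, which is precisely part \textnormal{(1)} of Conjecture~\ref{Conj: Stanton}. Properties (i) and (ii) hold unconditionally, and all of the conjectural input enters only through (iii).

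For symmetry, I would use the classical identity $N(m,N)=N(-m,N)$, so that $\mathrm{rank}_N(\zeta)$ is palindromic, together with the observation that the correction term $\zeta^{N-2}-\zeta^{N-1}+\zeta^{2-N}-\zeta^{1-N}$ (with $N:=\ell n+\beta$) is itself invariant under $\zeta\mapsto\zeta^{-1}$. For divisibility, Lemma~\ref{Lem:equidistribution} gives $\Phi_\ell(\zeta)\mid \mathrm{rank}_{\ell n+\beta}(\zeta)$, and a direct evaluation at $\zeta=\zeta_\ell$ disposes of the correction term: since $\zeta_\ell^N=\zeta_\ell^\beta$, substituting $\beta=4$ for $\ell=5$ and $\beta=5$ for $\ell=7$ and reducing exponents modulo $\ell$ makes the four monomials cancel in pairs. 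Because $\ell$ is prime, vanishing at $\zeta_\ell$ of a Laurent polynomial with integer coefficients forces divisibility by the full cyclotomic polynomial $\Phi_\ell$, giving (ii).

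The heart of the matter is (iii), and this is where Conjecture~\ref{Conj: rank unimodality} is used. The key elementary observation is that the plain rank polynomial fails to be unimodal only at its two extreme exponents: no partition of $N$ has rank $N-2$, so $N(N-2,N)=0$, while $N(N-1,N)=1$, so on the positive side the coefficient sequence dips to $0$ and then jumps back to $1$ at the very top (and symmetrically at the bottom). The modification is designed precisely to repair this, transferring one unit from exponent $N-1$ to exponent $N-2$: it replaces the extreme coefficients by $N^*(N-1)=0$ and $N^*(N-2)=1$ and leaves all inner coefficients unchanged. Granting Conjecture~\ref{Conj: rank unimodality}, the values satisfy $N(0,N)\ge N(1,N)\ge\cdots\ge N(N-3,N)$, and since the partition $(N-1,1)$ shows $N(N-3,N)\ge1$, the corrected coefficients obey $N^*(0)\ge N^*(1)\ge\cdots\ge N^*(N-3)\ge N^*(N-2)=1\ge N^*(N-1)=0$. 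Combined with symmetry, this is exactly unimodality peaked at $0$.

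Two points require care rather than new ideas, and they constitute the only (elementary) obstacle, since all of the genuinely hard analytic content has been absorbed into the assumed conjecture. First, Conjecture~\ref{Conj: rank unimodality} is stated only for partition size at least $39$, so I would apply it with $N=\ell n+\beta\ge39$ and dispose of the finitely many remaining cases ($\ell=5$ with $n\le6$ and $\ell=7$ with $n\le4$) by direct computation of the relevant quotients. Second, the boundary bookkeeping at the two extreme exponents must be tracked exactly, since this is the one place where the modification interacts nontrivially with unimodality; verifying that the transferred unit lands precisely in the gap left by $N(N-2,N)=0$ is what turns the assumed monotonicity of the inner coefficients into genuine unimodality of $\mathrm{rank}_{\ell,n}^*(\zeta)$.
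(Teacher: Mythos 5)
Your proposal is correct and follows essentially the same route as the paper: reduce to Lemma~\ref{Lem: unimodality}, get symmetry from $N(m,n)=N(-m,n)$ plus the palindromic correction term, get divisibility from Lemma~\ref{Lem:equidistribution} together with the explicit cancellation of the correction term at $\zeta_\ell$, and get unimodality from Conjecture~\ref{Conj: rank unimodality} combined with the boundary analysis $N(N-1,N)=1$, $N(N-2,N)=0$, $N(N-3,N)=1$. If anything, you are slightly more careful than the paper in flagging that the conjecture only applies for $\ell n+\beta\ge 39$ and that the finitely many smaller cases must be checked by direct computation.
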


It is natural to ask whether Stanton's conjectures are part of a broader phenomenon. Searching for an extension of them may also shed light on their combinatorial interpretation. Recently, in \cite{RolenTrippWagner2020}, Wagner and two of the authors gave a procedure for generating infinite families of crank-type functions which ``explain'' most known congruences for the family of $k$-colored partitions. Thus, it is natural to ask whether a deeper phenomenon like Stanton's conjecture also holds in these cases. Numerically, the authors found that these functions do not typically satisfy Stanton-type conjectures. Moreover, the authors were unable to find simple modifications like Stanton found for rank and crank which ``fixed'' positivity of the quotients by cyclotomic polynomials. However, the method for producing such functions in \cite{RolenTrippWagner2020} is flexible, and the crank-type functions produced are not unique. 

This paper suggests new families of crank-type functions $\mathcal{A}_k(z;\tau)$ and $\mathcal{B}_k(z;\tau)$ (see \eqref{AkBkDefn} for the definition), produced using the same machinery. These still explain most congruences of the colored partitions. The proof is sketched in Section~\ref{SectionStantonColored}.  These invariants appear to satisfy Stanton-type conjectures without any modifications, which suggests that they may be more natural to consider than the original crank-type functions in \cite{RolenTrippWagner2020}, and that Stanton's conjecture appears to be a very general phenomenon deserving an explanation. 

Along these lines, extensive numerical evidence suggests the following.

%In addition to proving parts of Conjecture~\ref{Conj: Stanton}, we are also able to provide Stanton-type conjectures for an infinite family of cranks $\mathcal{A}_k(z;\tau)$ and $\mathcal{B}_k(z;\tau)$ defined in \eqref{Eqn: modified crank def} similar to those in \cite{RolenTrippWagner2020}. Analogous to the cranks given in \cite{RolenTrippWagner2020}, it is easy to prove using the same methods that these cranks explain most known $k$-colored partition congruences. However, the authors were unable to find a Stanton-type conjecture for the cranks in \cite{RolenTrippWagner2020}. One reason that this might be is that the coefficients of these cranks are not unimodal. While the coefficients of the cranks of \cite{RolenTrippWagner2020} are not unimodal, extensive numerical evidence (see Section~\ref{Subsection: computational}) suggests the following conjecture for $\mathcal{A}_k$ and $\mathcal{B}_k$, where $[q^n]f(z;\t)$ denotes the $n$-th Fourier coefficient of a function $f$.
\begin{Conj}\label{Conj: Unimodality of A and B}
	For all $n \geq 15$, (resp. $24$) and all $k\geq7$, $[q^n]\mathcal{A}_k(z;\tau)$ (resp. $[q^n]\mathcal{B}_k(z;\tau)$) are unimodal Laurent polynomials.
\end{Conj}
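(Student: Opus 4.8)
The plan is to reduce the unimodality of $[q^n]\mathcal{A}_k(z;\tau)$ and $[q^n]\mathcal{B}_k(z;\tau)$ to unimodality statements about the building blocks from which $\mathcal{A}_k$ and $\mathcal{B}_k$ are assembled. Since these crank-type functions are defined (via \eqref{AkBkDefn}) as products of shifted copies of the ordinary crank generating function together with theta-block factors, the first step is to write $[q^n]\mathcal{A}_k(z;\tau)$ explicitly as a finite sum of products of the Laurent polynomials $\mathrm{crank}_m(\zeta)$ with monomials and partial theta contributions. The governing principle I would invoke is that the product of two \emph{symmetric unimodal} Laurent polynomials with non-negative coefficients is again symmetric and unimodal; this is a classical fact, and the crank polynomials $\mathrm{crank}_m(\zeta)$ are symmetric (since $\mathrm{crank}(\lambda)$ and $-\mathrm{crank}(\lambda)$ are equidistributed) and, by the inequalities for the crank already used in the proof of Theorem~\ref{Thm: crank conjectures}, unimodal with non-negative coefficients.

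First I would isolate the symmetry: each factor entering $\mathcal{A}_k$ and $\mathcal{B}_k$ should be arranged (after an overall monomial normalization) to be palindromic in $\zeta$, so that the coefficient extraction $[q^n]$ produces a symmetric Laurent polynomial. This is where the lower bounds on $n$ (namely $n\geq 15$ and $n\geq 24$) and $k\geq 7$ enter: for small $n$ the theta-block prefactor can introduce sign patterns or gaps that destroy non-negativity or unimodality, so the bounds are precisely the threshold beyond which every constituent crank factor contributing to $[q^n]$ is already in the unimodal regime. I would verify the finitely many boundary cases $15\le n\le N_0$ (resp. $24\le n\le N_0$) for a modest cutoff $N_0$ by direct computation, and handle the tail uniformly.

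Second, for the tail I would apply the multiplicativity of unimodality under the convolution structure of $[q^n]$. Writing $\mathcal{A}_k(z;\tau)=\prod_j C(\zeta q^{a_j};\tau)^{e_j}\cdot\Theta(z;\tau)$ schematically, the coefficient $[q^n]$ is a convolution $\sum \mathrm{crank}_{m_1}(\zeta)\cdots\mathrm{crank}_{m_r}(\zeta)$ over compositions of $n$ adjusted by the exponents $a_j$. Each summand is a product of symmetric unimodal non-negative Laurent polynomials centered at the same point (after normalization), hence symmetric and unimodal; a sum of symmetric unimodal Laurent polynomials \emph{sharing a common center of symmetry} is again symmetric and unimodal. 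The key structural observation to establish is therefore that all summands share a common center, which follows from the symmetry of the crank and the balanced shape of the theta-block normalization built into \eqref{AkBkDefn}.

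The main obstacle will be precisely this last point: unimodality is \emph{not} preserved under addition in general, only under addition of symmetric unimodal polynomials with a common center, so I must show that the theta-block factor $\Theta$ and the distinct shifts $a_j$ do not break the common symmetry of the convolution summands. Establishing that $\mathcal{A}_k$ and $\mathcal{B}_k$ are genuinely palindromic as Laurent polynomials in $\zeta$ for each fixed power of $q$ — rather than merely having palindromic factors — is delicate because the partial-theta contributions are not themselves symmetric in isolation. I expect that resolving this requires either a reflection symmetry of the full generating function inherited from the Macdonald-identity structure used in \cite{RolenTrippWagner2020}, or an explicit pairing argument on the supporting monomials. Absent an a priori symmetry, the unimodality claim would have to be attacked coefficient-by-coefficient, which is why I would regard proving the global symmetry of $[q^n]\mathcal{A}_k$ and $[q^n]\mathcal{B}_k$ as the crux, and why the conjecture is stated rather than proved here.
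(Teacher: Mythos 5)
First, a point of orientation: the paper does not prove this statement at all. Conjecture~\ref{Conj: Unimodality of A and B} is stated as an open conjecture, supported only by the numerical verification described in Section~\ref{Subsection: computational} (all $3\leq k\leq 20$ and $n\leq 99$), and the paper explicitly says that proving it ``seems to require delicate analytic techniques that are currently out of reach.'' So there is no proof in the paper to match your proposal against; the only question is whether your sketch could close the gap. It cannot, for the following concrete reasons.

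The central step of your plan --- that $[q^n]\mathcal{A}_k$ is a sum of products of symmetric unimodal non-negative Laurent polynomials, each product unimodal by the classical product theorem --- fails at its very first input. You assert that the crank polynomials $\mathrm{crank}_m(\zeta)$ are unimodal ``by the inequalities for the crank already used in the proof of Theorem~\ref{Thm: crank conjectures}.'' This is false, and the paper itself proves it is false: by Lemma~\ref{Lem: constant crank} (and the remark following Theorem~\ref{Thm: crank unimodality}), $M(m-1,m)=0$ while $M(m,m)=1$, so $\mathrm{crank}_m(\zeta)$ has an isolated spike at its extreme degree and is \emph{never} unimodal for $m\geq 2$. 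The Ji--Zang inequality covers only $1\le j\le m-1$ and the failure is exactly at the boundary; this edge defect is the entire reason Stanton's modified crank $\mathrm{crank}^*_{\ell,n}$ exists. Consequently the convolution summands you want to add are not unimodal, and ``sum of symmetric unimodal polynomials with a common center'' has nothing to act on. A second, independent obstruction: the factors entering \eqref{AkBkDefn} are $\mathcal{C}(a_jz;\tau)$ with $a_j\geq 2$, so $[q^m]\mathcal{C}(a_jz;\tau)=\mathrm{crank}_m\!\left(\zeta^{a_j}\right)$ is supported only on exponents divisible by $a_j$; such lacunary polynomials are not unimodal in $\zeta$ regardless of the edge-spike issue, so the product theorem is inapplicable twice over. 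The unimodality asserted by the conjecture is an emergent property of the full convolution (gaps get filled and spikes get smoothed only after summing over all compositions of $n$), which is precisely why it resists this kind of factor-by-factor argument and why the thresholds $n\geq 15$, $n\geq 24$ appear.

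Finally, you misidentify the crux. There are no theta-block or partial-theta factors in the definition \eqref{AkBkDefn} --- theta blocks enter only in Lemmas~\ref{denomfactors1} and~\ref{denomfactors2}, which concern divisibility by $\Phi_\ell(\zeta)$, not the shape of $\mathcal{A}_k$, $\mathcal{B}_k$ themselves --- and the global symmetry of $[q^n]\mathcal{A}_k$ and $[q^n]\mathcal{B}_k$ that you flag as the delicate point is in fact trivial: each $\mathrm{crank}_m\!\left(\zeta^{a_j}\right)$ is invariant under $\zeta\mapsto\zeta^{-1}$, and products and sums of such polynomials remain so (this is exactly the one-line symmetry argument the paper uses in the proof of Theorem~\ref{Thm: Stanton for colored partitions}). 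So the part you propose to treat as the main difficulty is easy, and the part you treat as routine (unimodality of the convolution) is the genuinely open problem.
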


Similar to Conjecture~\ref{Conj: rank unimodality}, proving this seems to require delicate analytic techniques that are currently out of reach. Assuming this conjecture, we have the following. 

\begin{Thm}\label{Thm: Stanton for colored partitions}
	Assume Conjecture~\ref{Conj: Unimodality of A and B} is true.
	\begin{enumerate}[leftmargin=*,label=\textnormal{(\arabic*)}]
		\item If $\ell n + \delta_{k,\ell} \ge 15$ and $p_k(\ell n + \delta_{k,\ell}) \equiv 0 \pmod{\ell}$ is a Ramanujan-type congruence coming from Theorem~\ref{Thm: k-colored congruences} with $h \notin \{14, 26\}$ if $k$ is odd and $h \neq 26$ if $k$ is even, then 
		\begin{equation*}
			\frac{\left[q^{\ell n + \delta_{k,\ell}}\right]\mathcal{A}_k(z;\tau)}{\Phi_\ell(\zeta)}
		\end{equation*}
		is a Laurent polynomial with non-negative coefficients.
		\item If $\ell n + \delta_{k,\ell} \ge 24$ and $p_k(\ell n + \delta_{k,\ell}) \equiv 0 \pmod{\ell}$ is a Ramanujan-type congruence coming from Theorem~\ref{Thm: k-colored congruences} with $h \notin \{4, 8, 10, 26\}$ and $k\geq 7$ is odd, then 
		\begin{equation*}
			\frac{\left[q^{\ell n + \delta_{k,\ell}}\right]\mathcal{B}_k(z;\tau)}{\Phi_\ell(\zeta)}
		\end{equation*}
		is a Laurent polynomial with non-negative coefficients.
	\end{enumerate}
\end{Thm}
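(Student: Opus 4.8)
The plan is to reduce Theorem~\ref{Thm: Stanton for colored partitions} to an abstract statement about when a quotient of a unimodal, symmetric Laurent polynomial by a cyclotomic polynomial has non-negative coefficients, in direct analogy with the proof of Theorem~\ref{Thm: crank conjectures}. The starting observation is that divisibility of $[q^{\ell n+\delta_{k,\ell}}]\mathcal{A}_k(z;\tau)$ and $[q^{\ell n+\delta_{k,\ell}}]\mathcal{B}_k(z;\tau)$ by $\Phi_\ell(\zeta)$ is exactly the statement that the corresponding congruence from Theorem~\ref{Thm: k-colored congruences} holds, via the same mechanism as Lemma~\ref{Lem:equidistribution}: a congruence $p_k(\ell n+\delta_{k,\ell})\equiv 0\pmod\ell$ corresponds to the vanishing of the crank-type polynomial at $\zeta=\zeta_\ell$, and since the polynomial has integer (in fact symmetric) coefficients it is then divisible by the full cyclotomic polynomial $\Phi_\ell(\zeta)$. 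The hypotheses excluding certain values of $h$ are precisely the constructions in \cite{RolenTrippWagner2020} (and the new $\mathcal{A}_k,\mathcal{B}_k$ here) under which equidistribution, hence divisibility, is known to hold.

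Granting divisibility, the first real step is to record the \emph{symmetry} of these coefficient polynomials. Because $\mathcal{A}_k$ and $\mathcal{B}_k$ are built as products of ordinary crank generating functions in the spirit of \eqref{Eqn: C_k}, each coefficient $[q^n]\mathcal{A}_k(z;\tau)$ should be invariant under $\zeta\mapsto\zeta^{-1}$, i.e.\ it is a palindromic Laurent polynomial $f(\zeta)=f(\zeta^{-1})$. I would then invoke the abstract criterion advertised in the introduction (our Lemma on quotients of symmetric unimodal polynomials by cyclotomic polynomials): if $f$ is symmetric, unimodal, and divisible by $\Phi_\ell(\zeta)$, then $f(\zeta)/\Phi_\ell(\zeta)$ has non-negative coefficients. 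The point is that unimodality plus symmetry forces the partial sums controlling the long division by $1+\zeta+\cdots+\zeta^{\ell-1}$ to stay non-negative; this is the same structural fact that makes Conjecture~\ref{Conj: Stanton} reduce to unimodality of the modified rank and crank. Conjecture~\ref{Conj: Unimodality of A and B} supplies exactly the missing unimodality input (for $n\ge 15$ resp.\ $n\ge 24$), so the abstract criterion applies verbatim.

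Concretely, the steps in order are: (i) establish palindromicity of $[q^m]\mathcal{A}_k$ and $[q^m]\mathcal{B}_k$ from the explicit product definition in \eqref{AkBkDefn}; (ii) for each admissible $h$ (those not excluded in the two cases), cite or re-derive from \cite{RolenTrippWagner2020} the equidistribution giving divisibility by $\Phi_\ell(\zeta)$ at the relevant arithmetic progression $\ell n+\delta_{k,\ell}$; (iii) assume Conjecture~\ref{Conj: Unimodality of A and B} to get unimodality in the stated ranges; and (iv) apply the abstract non-negativity lemma to conclude that the quotients have non-negative coefficients. The excluded values of $h$ and the parity/size restrictions on $k$ are exactly the configurations where either the theta-block construction fails to produce a clean equidistribution or where the degree bookkeeping breaks down, so they must be carved out at step (ii).

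The main obstacle is step (ii): verifying that for the \emph{new} families $\mathcal{A}_k,\mathcal{B}_k$ the divisibility by $\Phi_\ell(\zeta)$ actually holds across the whole admissible range of $h$. Unlike the classical rank and crank, where divisibility follows from the Atkin--Swinnerton-Dyer and Andrews--Garvan results, here one must rerun the Macdonald-identity/theta-block argument of \cite{RolenTrippWagner2020} for the modified products; the restrictions $h\notin\{14,26\}$, $h\notin\{4,8,10,26\}$, etc.\ reflect exactly where that argument does or does not yield equidistribution. This is why the theorem is stated conditionally and with these exclusions, and why its proof is only sketched in Section~\ref{SectionStantonColored}: once divisibility and palindromicity are in hand, the passage to non-negativity is purely the abstract unimodality lemma, and the only genuinely new analytic content is the unimodality asserted in Conjecture~\ref{Conj: Unimodality of A and B}, which remains out of reach.
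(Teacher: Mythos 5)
Your proposal matches the paper's proof essentially step for step: the paper establishes symmetry from the fact that the coefficients of $\mathcal{A}_k$ and $\mathcal{B}_k$ are sums and products of the symmetric polynomials $\mathrm{crank}_n(\zeta)$, obtains divisibility by $\Phi_\ell(\zeta)$ from analogues of Lemmas 4.1 and 4.2 of \cite{RolenTrippWagner2020} (stated as Lemmas~\ref{denomfactors1} and \ref{denomfactors2}, yielding Corollary~\ref{Cor: theta blocks}), takes unimodality from Conjecture~\ref{Conj: Unimodality of A and B}, and concludes via Lemma~\ref{Lem: unimodality} --- exactly your steps (i)--(iv), including your correct identification that rerunning the theta-block/Macdonald-identity argument for the new families is the only substantive work and the reason the exclusions on $h$ appear. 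One imprecision worth fixing: your opening claim that divisibility by $\Phi_\ell(\zeta)$ ``is exactly the statement that the corresponding congruence holds'' is backwards --- by Lemma~\ref{Lem:equidistribution}, divisibility is equivalent to \emph{equidistribution} of the crank-type statistic, which implies the congruence but is strictly stronger and cannot be deduced from it; your step (ii) already treats this correctly, so the error is inert, but the framing should be corrected.
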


\begin{rmk*}
	Throughout this paper, we are studying the $n$-th Fourier coefficient (in $\t$) for a fixed $n$ of objects with modularity properties. Traditionally, it has been more common to study the $n$-th Fourier coefficient (in $z$) for a fixed $m$ instead; see e.g. \cite{DabholkarMurthyZagier2012}.
\end{rmk*}

The paper is organized as follows. In Section~\ref{Section: prelim}, we provide a lemma illustrating the connection between divisibility and equidistribution, along with known results about crank equalities and inequalities. We then provide proofs in Section~\ref{Section: proofs} of a lemma, our main theorem, and its corollaries. Finally, Section~\ref{Section: examples} provides computational evidence related to Conjecture~\ref{Conj: Unimodality of A and B} and directions for future research.

\section*{Acknowledgements}

The first author has received funding from the European Research Council (ERC) under the European Union's Horizon 2020 research and innovation programme (grant agreement No. 101001179). 
The third author is grateful for support from a grant from the Simons Foundation (853830, LR), support from a Dean's Faculty Fellowship from Vanderbilt University, and to the Max Planck Institute for Mathematics in Bonn for its hospitality and financial support.
 The authors thank David Chan and Ken Ono for useful discussions related to the topic of the paper. Moreover we thank the referees for carefully reading our paper and making helpful comments. On behalf of all authors, the corresponding author states that there is no conflict of interest and that there is no associated data for this manuscript.

\section{Preliminaries}\label{Section: prelim}

We begin by formally defining terms used in the introduction. We let $N(r,t;n)$ (resp. $M(r,t;n)$) be the number of partitions of $n$ with rank (resp. crank) congruent to $r \pmod{t}$. The equidistribution of the rank modulo $5$ and $7$ is equivalent to 
\begin{align*}
	&N(0,5;5n+4) = N(1,5;5n+4) = \dots = N(4,5;5n+4),\\
	&N(0,7;7n+5) = N(1,7;7n+5) = \dots = N(6,7;7n+5).
\end{align*}
Equidistribution of the crank modulo $5$, $7$, and $11$ may be written similarly in terms of $M(r,t;n)$.  Additionally, we let $N(m,n)$ (resp. $M(m,n)$) be the number of partitions of $n$ with rank $m$ (resp. crank $m$). Letting $\zeta := e^{2\pi i z}$ and $q := e^{2\pi i \tau}$, \cite{AtkinSwinnertonDyer1954} showed that we have the two-parameter generating function
\begin{align}\nonumber
	\mathcal{R}(z;\tau) := &\sum_{\substack{m\in\Z\\n\ge0}}N(m,n)\zeta^m q^n =: \sum_{n=0}^\infty \text{rank}_n(\zeta) q^n\\
	\label{Eqn: rank gen fcn}
	= &\sum_{n=0}^\infty \frac{q^{n^2}}{\prod_{k=1}^n (1-\zeta q^k)(1-\zeta^{-1}q^k)}.
\end{align}
Andrews and Garvan \cite{AndrewsGarvan1988} proved that aside from the anomalous case of $M(m,n)$ if $n=1$ (where the correct values are $M(0,1):=1$ and $M(m,1):=0$ for $m\neq0$) the crank generating function is given by
\begin{equation}\label{Eqn: crank product}
	\mathcal{C}(z;\tau) := \sum_{\substack{m\in\Z\\n\ge0}} M(m,n)\zeta^m q^n =: \sum_{n=0}^\infty \text{crank}_n(\zeta)q^n = \prod_{n=1}^\infty \frac{1-q^n}{(1-\zeta q^n)(1-\zeta^{-1}q^n)}.
\end{equation}
The cranks that the authors in \cite{RolenTrippWagner2020} used are of the form
\begin{equation}\label{Eqn: C_k}
	\mathcal{C}_k\left(a_1, a_2, \dots, a_{\frac{k+\delta_{2\nmid k}}{2}};z;\tau\right) := \mathcal{C}(0;\tau)^{\frac{k-\delta_{2\nmid k}}{2}} \prod_{j=1}^{\frac{k+\delta_{2\nmid k}}{2}} \mathcal{C}(a_j z; \tau),
\end{equation}
where $a_j \in \Z$ for $j = 1, \dots, \frac{k+\delta_{2\nmid k}}{2}$ and where $\delta_S:=1$ if a statement $S$ is true and 0 otherwise. Note that the notation differs slightly from that of \cite{RolenTrippWagner2020} and we choose it since it is more convenient for our purposes.
Recalling \eqref{Eqn: C_k}, we define (in the case of $\mathcal B_k$, for $k\geq7$), 
\begin{equation}\label{AkBkDefn}
\begin{aligned}%\label{Eqn: def of Ak and Bk}
	\mathcal{A}_k(z;\tau) &:= \mathcal{C}_k\left(\frac{k+\delta_{2\nmid k}}{2} + 1, \frac{k+\delta_{2\nmid k}}{2}, \dots, 3, 2; z; \tau\right),\\
	\mathcal{B}_k(z;\tau) &:= \mathcal{C}_k\left(\frac{k+\delta_{2\nmid k}}{2} + 2, \frac{k+\delta_{2\nmid k}}{2} + 1, \dots, 6, 5, 3, 2; z; \tau\right).
\end{aligned}
\end{equation}

We begin with a lemma illustrating how Conjecture~\ref{Conj: Stanton} is a statement related to the equidistribution of the rank and crank. This is implicit in the existing literature, but we provide a proof here for the convenience of the reader. By $\Phi_\ell(\zeta)|f(\zeta)$ for $f(\zeta) \in \Q[\zeta^{-1}, \zeta]$, we mean that $f(\zeta) = g(\zeta)\Phi_\ell(\zeta)$ for $g(\zeta) \in \Q[\zeta^{-1}, \zeta]$, i.e., that the quotient $\frac{f(\zeta)}{\Phi_\ell(\zeta)}$ is a Laurent polynomial as well. Define
\[
\widehat{f}_{r,\ell} := \sum_{j \equiv r \pmod{\ell}} \left[\zeta^j\right]f(\zeta).
\]

\begin{Lemma}\label{Lem:equidistribution}
	Let $f(\zeta)$ be a Laurent polynomial in $\Q[\zeta^{-1}, \zeta]$ and $\ell$ a prime. Then $\Phi_\ell(\zeta)\mid f(\zeta)$ in $\Q[\zeta^{-1}, \zeta]$ if and only if for $r\in\{0,\dots,\ell-2\}$
	\begin{equation*}
		\widehat{f}_{r,\ell} = \widehat{f}_{\ell - 1,\ell}.
	\end{equation*} 
\end{Lemma}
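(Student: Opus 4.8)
The plan is to relate divisibility by $\Phi_\ell(\zeta)$ to the vanishing of $f$ at primitive $\ell$-th roots of unity, and then translate that vanishing into the equidistribution condition on the coefficient sums $\widehat{f}_{r,\ell}$. Since $\ell$ is prime, $\Phi_\ell(\zeta) = 1 + \zeta + \cdots + \zeta^{\ell-1}$ is irreducible over $\Q$, and its roots are exactly the primitive $\ell$-th roots of unity $\zeta_\ell^r$ for $r \in \{1, \dots, \ell-1\}$. The subtlety is that $f$ is a \emph{Laurent} polynomial, not an ordinary polynomial, so I would first clear denominators: write $f(\zeta) = \zeta^{-M} g(\zeta)$ where $g \in \Q[\zeta]$ is an honest polynomial and $-M$ is the lowest degree appearing in $f$. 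Because $\zeta^{-M}$ is a unit in $\Q[\zeta^{-1}, \zeta]$, we have $\Phi_\ell(\zeta) \mid f(\zeta)$ in the Laurent ring if and only if $\Phi_\ell(\zeta) \mid g(\zeta)$ in $\Q[\zeta]$, and by irreducibility this holds if and only if $g(\zeta_\ell) = 0$, equivalently $f(\zeta_\ell) = 0$.

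Next I would reformulate $f(\zeta_\ell) = 0$ in terms of the coefficient sums. Evaluating at $\zeta = \zeta_\ell$ groups the coefficients of $f$ by their exponent modulo $\ell$, since $\zeta_\ell^j$ depends only on $j \bmod \ell$:
\begin{equation*}
	f(\zeta_\ell) = \sum_{j} \left[\zeta^j\right] f(\zeta) \cdot \zeta_\ell^j = \sum_{r=0}^{\ell-1} \widehat{f}_{r,\ell}\, \zeta_\ell^r.
\end{equation*}
Thus $f(\zeta_\ell) = 0$ is precisely the statement $\sum_{r=0}^{\ell-1} \widehat{f}_{r,\ell}\, \zeta_\ell^r = 0$. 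Now I use the minimal polynomial of $\zeta_\ell$: the powers $1, \zeta_\ell, \dots, \zeta_\ell^{\ell-2}$ are linearly independent over $\Q$, while $\zeta_\ell^{\ell-1} = -(1 + \zeta_\ell + \cdots + \zeta_\ell^{\ell-2})$. Substituting this relation and collecting terms, the vanishing becomes
\begin{equation*}
	\sum_{r=0}^{\ell-2} \left( \widehat{f}_{r,\ell} - \widehat{f}_{\ell-1,\ell} \right) \zeta_\ell^r = 0.
\end{equation*}
Since the $\widehat{f}_{r,\ell}$ are rational and $\{1, \zeta_\ell, \dots, \zeta_\ell^{\ell-2}\}$ is a $\Q$-basis, every coefficient must vanish, giving exactly $\widehat{f}_{r,\ell} = \widehat{f}_{\ell-1,\ell}$ for $r \in \{0, \dots, \ell-2\}$. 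The converse direction runs the same chain of equivalences backwards.

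The argument is essentially a sequence of equivalences, so there is no single hard obstacle; the main point requiring care is the passage between the Laurent ring $\Q[\zeta^{-1}, \zeta]$ and the polynomial ring $\Q[\zeta]$, making sure that divisibility in one corresponds to divisibility in the other via multiplication by the unit $\zeta^{-M}$. I would also want to state explicitly that primality of $\ell$ is what guarantees $\Phi_\ell$ is irreducible with simple roots, which is what lets me reduce divisibility to a single evaluation and then to $\Q$-linear independence of the root's powers. Everything else is a routine unwinding of the definition of $\widehat{f}_{r,\ell}$ and the cyclotomic relation.
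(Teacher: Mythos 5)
Your proposal is correct and follows essentially the same route as the paper's own proof: reduce to $\Q[\zeta]$ by clearing the unit power of $\zeta$, use irreducibility of $\Phi_\ell$ to replace divisibility with the vanishing $f(\zeta_\ell)=0$, group coefficients by exponent modulo $\ell$, eliminate $\zeta_\ell^{\ell-1}$ via the cyclotomic relation, and conclude from the $\Q$-linear independence of $1,\zeta_\ell,\dots,\zeta_\ell^{\ell-2}$. The only cosmetic difference is that the paper phrases the reduction step via $\gcd(\zeta,\Phi_\ell(\zeta))=1$ rather than factoring out $\zeta^{-M}$ as a unit, which amounts to the same thing.
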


\begin{rmk*}
	Letting $f(\zeta)$ be $\text{rank}_{\ell n + \beta}(\zeta)$ or $\text{crank}_{\ell n + \beta}(\zeta)$, we see that divisibility by $\Phi_\ell(\zeta)$ is equivalent to equidistribution modulo $\ell$. We use this result frequently.
\end{rmk*}

\begin{proof}[Proof of Lemma \ref{Lem:equidistribution}]
 Multiplying $f(\zeta)$ by a sufficiently large power of $\zeta$ and using the fact that $\gcd(\zeta, \Phi_\ell(\zeta)) = 1$, we may assume that $f(\zeta) \in \Q[\zeta]$. Since $\Phi_\ell(\zeta)$ is irreducible over $\Q[\zeta]$, it is a standard fact from algebra that $\Phi_\ell(\zeta)\mid f(\zeta)$ is equivalent to $f(\zeta_\ell) = 0$. Writing $f(\zeta) = \sum_{j = 0}^n a_j \zeta^j$, we see that
	\begin{equation*}\label{Eqn: root of unity evaluation}
		f(\zeta_\ell) = \sum_{j=0}^n a_j \zeta_\ell^j = \sum_{r=0}^{\ell -1}  \sum_{\substack{0 \leq j \leq n \\ j \equiv r\pmod{\ell}}} a_j  \zeta_\ell^r = \sum_{r = 0}^{\ell - 1} \widehat{f}_{r,\ell} \zeta_\ell^r = \sum_{r=0}^{\ell - 2} \left(\widehat{f}_{r,\ell} - \widehat{f}_{\ell - 1,\ell}\right) \zeta_\ell^r,
	\end{equation*}
	where for the last equality we use the fact that $\Phi_\ell (\zeta_\ell) = 0$. Since $1, \zeta_{\ell}, \dots, \zeta_{\ell}^{\ell -2}$ is a basis for $\Q[\zeta]$ over $\Q$, the claim follows. 
\end{proof}

Lemma \ref{Lem:equidistribution} can be generalized to equidistribution modulo prime powers by requiring divisibility by multiple cyclotomic polynomials, but we omit the details since we are only interested in equidistribution modulo primes in this paper. However, we utilize a modified version of the above lemma when we require divisibility by $\Phi_5(\zeta^2)$ for the proof of part (2) of Conjecture~\ref{Conj: Stanton}. The following is used in conjunction with Theorem~\ref{Thm: crank mod 10}, which in particular satisfies the conditions of this lemma.

\begin{Lemma}\label{Lem: modified equidistribution}
	Let $f(\zeta)$ be a Laurent polynomial and $\ell$ an odd prime. Then $\Phi_\ell(-\zeta)\mid f(\zeta)$ in $\Q[\zeta^{-1}, \zeta]$ if and only if for $r\in\{0,1,\dots,\ell-2\}$,
	\begin{equation*}\label{Eqn: mod 2 ell divisibility}
		(-1)^r \left(\widehat{f}_{r,2\ell} - \widehat{f}_{r+\ell,2\ell}\right) = \widehat{f}_{\ell - 1,2\ell} - \widehat{f}_{2\ell - 1,2\ell}.
	\end{equation*}
\end{Lemma}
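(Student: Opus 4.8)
The plan is to mimic the proof of Lemma~\ref{Lem:equidistribution}, replacing the primitive $\ell$-th root of unity $\zeta_\ell$ by a primitive $2\ell$-th root of unity. First I would observe that $\Phi_\ell(-\zeta)$ is, up to sign and a power of $\zeta$, the minimal polynomial of the primitive $2\ell$-th roots of unity. Indeed, since $\ell$ is odd, the roots of $\Phi_\ell(-\zeta)$ are exactly the numbers $-\zeta_\ell^r$ for $r=1,\dots,\ell-1$, and each such number is a primitive $2\ell$-th root of unity. Writing $\xi := -\zeta_\ell = \zeta_{2\ell}^{\,\ell+\ldots}$ more concretely $\xi := e^{\pi i/\ell}\cdot(\text{odd power})$, I would fix $\xi := \zeta_{2\ell}$ a primitive $2\ell$-th root of unity and note that $\Phi_\ell(-\zeta)$ is (again up to a unit in $\Q[\zeta^{-1},\zeta]$) the $2\ell$-th cyclotomic polynomial $\Phi_{2\ell}(\zeta)$, which is irreducible over $\Q$ of degree $\varphi(2\ell)=\ell-1$. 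As before, after multiplying $f$ by a large power of $\zeta$ we may assume $f\in\Q[\zeta]$, and by irreducibility $\Phi_\ell(-\zeta)\mid f(\zeta)$ is equivalent to the single scalar condition $f(\xi)=0$, where $\xi$ is a primitive $2\ell$-th root of unity.

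Next I would evaluate $f(\xi)$ by grouping the exponents of $f$ according to their residue modulo $2\ell$. Writing $f(\zeta)=\sum_j a_j\zeta^j$, grouping gives
\begin{equation*}
	f(\xi)=\sum_{r=0}^{2\ell-1}\widehat{f}_{r,2\ell}\,\xi^{r}.
\end{equation*}
The key simplification comes from the relation $\xi^{\ell}=-1$ (since $\xi$ is a primitive $2\ell$-th root of unity), so that $\xi^{r+\ell}=-\xi^{r}$. This lets me fold the sum over the $2\ell$ residues into a sum over the $\ell$ residues $r=0,\dots,\ell-1$:
\begin{equation*}
	f(\xi)=\sum_{r=0}^{\ell-1}\left(\widehat{f}_{r,2\ell}-\widehat{f}_{r+\ell,2\ell}\right)\xi^{r}.
\end{equation*}
I would then observe that $\xi^r=(-\zeta_\ell)^r=(-1)^r\zeta_\ell^r$ when $\xi=-\zeta_\ell$ is chosen, so that $f(\xi)=\sum_{r=0}^{\ell-1}(-1)^r(\widehat{f}_{r,2\ell}-\widehat{f}_{r+\ell,2\ell})\zeta_\ell^r$. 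Using $\Phi_\ell(\zeta_\ell)=0$ to express $\zeta_\ell^{\ell-1}=-(1+\zeta_\ell+\ldots+\zeta_\ell^{\ell-2})$, I would rewrite this in the basis $1,\zeta_\ell,\dots,\zeta_\ell^{\ell-2}$ of $\Q(\zeta_\ell)$ over $\Q$, exactly as in Lemma~\ref{Lem:equidistribution}.

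The final step is purely linear-algebraic: since $1,\zeta_\ell,\dots,\zeta_\ell^{\ell-2}$ is a $\Q$-basis, the vanishing $f(\xi)=0$ is equivalent to the vanishing of each coefficient in this basis, which after collecting the $r=\ell-1$ term into the others yields precisely the stated system of equations
\begin{equation*}
	(-1)^r\left(\widehat{f}_{r,2\ell}-\widehat{f}_{r+\ell,2\ell}\right)=\widehat{f}_{\ell-1,2\ell}-\widehat{f}_{2\ell-1,2\ell}
\end{equation*}
for $r\in\{0,1,\dots,\ell-2\}$, where the right-hand side is the value coming from the $r=\ell-1$ term (the sign $(-1)^{\ell-1}=1$ since $\ell$ is odd). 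I expect the main obstacle to be bookkeeping the signs correctly: one must be careful about which primitive $2\ell$-th root $\xi$ is selected (the identity $\xi=-\zeta_\ell$ versus a general odd power of $\zeta_{2\ell}$) and about the sign $(-1)^{\ell-1}=1$ that makes the $r=\ell-1$ term carry no sign, so that the right-hand side matches the stated form. Verifying that $\Phi_\ell(-\zeta)$ is genuinely irreducible (equivalently equals $\Phi_{2\ell}(\zeta)$ up to a unit) and that the relevant roots are primitive $2\ell$-th roots of unity requires only the elementary fact that $\varphi(2\ell)=\varphi(\ell)=\ell-1$ for odd prime $\ell$, so the algebraic input is the same standard fact about cyclotomic polynomials used before.
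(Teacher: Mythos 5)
Your proposal is correct and follows essentially the same route as the paper's own proof: reduce to the single condition $f(-\zeta_\ell)=0$ via irreducibility of $\Phi_\ell(-\zeta)$, group exponents by residues modulo $2\ell$ (your intermediate identification $\Phi_\ell(-\zeta)=\Phi_{2\ell}(\zeta)$ and the folding via $\xi^\ell=-1$ are just a more explicit phrasing of the paper's direct grouping), and conclude by linear independence of $1,\zeta_\ell,\dots,\zeta_\ell^{\ell-2}$ over $\Q$. The sign bookkeeping you flag, including $(-1)^{\ell-1}=1$ for odd $\ell$, is handled exactly as in the paper, so there is no gap.
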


\begin{proof}
	 As in the proof of Lemma \ref{Lem:equidistribution}, we may assume that $f(\zeta) \in \Q[\zeta]$. As $\Phi_\ell(-\zeta)$ is also an irreducible polynomial in $\Q[\zeta]$, $\Phi_\ell(-\zeta)\mid f(\zeta)$ is equivalent to $f(-\zeta_\ell) = 0$ since $-\zeta_\ell$ is a root of $\Phi_\ell (-\zeta)$.  Writing $f(\zeta) = \sum_{j=0}^n a_j \zeta^j$, we see that
	\begin{align}\label{Eqn: negative root of unity evaluation}
		f(-\zeta_\ell) &= \sum_{j=0}^n a_j (-\zeta_\ell)^j = \sum_{r=0}^{\ell -1} \left( \sum_{\substack{0 \leq j \leq n \\ j \equiv r\pmod{2\ell}}} (-1)^{j} a_j + \sum_{\substack{0 \leq j \leq n \\ j \equiv \ell + r \pmod{2\ell}}} (-1)^j a_j \right) \zeta_\ell^r \nonumber \\
		&= \sum_{r = 0}^{\ell - 1} (-1)^r \left(\widehat{f}_{r,2\ell} - \widehat{f}_{r+\ell,2\ell}\right) \zeta_\ell^r \nonumber\\
		&= \sum_{r=0}^{\ell - 2} \left((-1)^r\left( \widehat{f}_{r,2\ell} - \widehat{f}_{r+\ell ,2\ell} \right) - \left(\widehat{f}_{\ell - 1,2\ell} - \widehat{f}_{2\ell - 1,2\ell} \right) \right)\zeta_\ell^r.\nonumber
	\end{align}
	Since $1, \zeta_{\ell}, \ldots , \zeta_{\ell}^{\ell - 2}$ is a basis for $\Q[\zeta_\ell]$ over $\Q $, we conclude the claim.
\end{proof}

We now review known results concerning the crank that allow us to prove Theorem~\ref{Thm: crank conjectures}. First, in order to prove (2) of Conjecture~\ref{Conj: Stanton}, we need additional relationships among cranks modulo $10$ given by Garvan \cite{Garvan1990}. 

\begin{Thm}[(1.17) and (1.18) of \cite{Garvan1990}]\label{Thm: crank mod 10}
	For $n\in\N_0$, $0 \le k \le 4$, $j\in\{0,1\}$
	\[
		M(2k+j,10;5n+4) = \frac 15 M(j,2;5n+4).
	\]
\end{Thm}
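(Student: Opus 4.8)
The plan is to reduce the identity to a statement about the crank polynomial $\text{crank}_{5n+4}(\zeta)$ evaluated at $10$th roots of unity by means of the finite Fourier transform over $\Z/10\Z$. For any $N$ and any residue $r$, the root-of-unity filter gives
\[
M(r,10;N)=\frac{1}{10}\sum_{s=0}^{9}\zeta_{10}^{-rs}\,\text{crank}_N(\zeta_{10}^s).
\]
I would analyze the ten terms on the right for $N=5n+4$ according to the order of $\zeta_{10}^s$. The term $s=0$ contributes $\text{crank}_{5n+4}(1)=p(5n+4)$. For $s\in\{2,4,6,8\}$ the point $\zeta_{10}^s=\zeta_5^{s/2}$ is a primitive fifth root of unity, so $\text{crank}_{5n+4}(\zeta_{10}^s)=0$ by the classical Andrews--Garvan congruence for the crank modulo $5$ (equivalently $\Phi_5(\zeta)\mid\text{crank}_{5n+4}(\zeta)$ via Lemma~\ref{Lem:equidistribution}). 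The term $s=5$ contributes $\zeta_{10}^{-5r}\,\text{crank}_{5n+4}(-1)=(-1)^r\,\text{crank}_{5n+4}(-1)$.

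The heart of the matter is therefore the four terms $s\in\{1,3,7,9\}$, i.e. the primitive tenth roots of unity $-\zeta_5^c$ with $c\not\equiv0\pmod 5$. The key claim is that
\[
\text{crank}_{5n+4}(\zeta)=0\qquad\text{for every primitive tenth root of unity }\zeta.
\]
Since the roots of $\Phi_5(-\zeta)$ are exactly these primitive tenth roots, this vanishing is precisely the hypothesis needed to apply Lemma~\ref{Lem: modified equidistribution} with $\ell=5$. Granting it, only $s=0$ and $s=5$ survive, so that
\[
M(r,10;5n+4)=\frac{1}{10}\left(p(5n+4)+(-1)^r\,\text{crank}_{5n+4}(-1)\right).
\]
Now writing $r=2k+j$ (so $(-1)^r=(-1)^j$) and using $p(5n+4)=M(0,2;5n+4)+M(1,2;5n+4)$ together with $\text{crank}_{5n+4}(-1)=M(0,2;5n+4)-M(1,2;5n+4)$, the two cross terms cancel and one is left with exactly $\frac15 M(j,2;5n+4)$, independent of $k$. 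This is the assertion of the theorem.

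It remains to prove the vanishing of $\text{crank}_{5n+4}$ at the primitive tenth roots of unity, which I regard as the main obstacle, since (unlike the fifth-root case) it does not follow formally from the mod-$5$ congruence. I would specialize the product \eqref{Eqn: crank product} at $\zeta=-\zeta_5^c$, giving
\[
\mathcal{C}(-\zeta_5^c;\tau)=\prod_{n\ge1}\frac{1-q^n}{(1+\zeta_5^c q^n)(1+\zeta_5^{-c}q^n)},
\]
and rewrite this as an explicit quotient of theta functions in $q$ using the factorization $\prod_{a=0}^{9}(1-\zeta_{10}^a x)=1-x^{10}$ together with the even/odd splitting $\prod_{n\ge1}(1+\zeta_5^c q^n)(1-\zeta_5^c q^n)=\prod_{n\ge1}(1-\zeta_5^{2c}q^{2n})$. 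One then computes the $5$-dissection of the resulting theta quotient by means of the Jacobi triple product and the quintuple product identity, in the style of Atkin--Swinnerton-Dyer and Garvan, and checks that the residue class $5n+4$ is absent from the expansion. This explicit dissection is the delicate computational step; it is carried out in Garvan \cite{Garvan1990}, whose equations (1.17) and (1.18) are exactly the statement recorded here.
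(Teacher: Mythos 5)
Your root-of-unity filter reduction is sound: the identity
\begin{equation*}
M(r,10;N)=\frac{1}{10}\sum_{s=0}^{9}\zeta_{10}^{-rs}\,\mathrm{crank}_N\!\left(\zeta_{10}^s\right),
\end{equation*}
the vanishing of the $s\in\{2,4,6,8\}$ terms via Andrews--Garvan equidistribution modulo $5$, and the final bookkeeping using $p(5n+4)=M(0,2;5n+4)+M(1,2;5n+4)$ and $\mathrm{crank}_{5n+4}(-1)=M(0,2;5n+4)-M(1,2;5n+4)$ all check out, and you correctly isolate the entire content of the theorem in the single claim that $\mathrm{crank}_{5n+4}(\zeta)$ vanishes at every primitive tenth root of unity. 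This is in fact exactly the form in which the paper later uses the theorem: combined with Lemma~\ref{Lem: modified equidistribution}, the statement $\Phi_5(-\zeta)\mid\mathrm{crank}_{5n+4}(\zeta)$ is what feeds into the proof of Conjecture~\ref{Conj: Stanton}~(2).

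You should be aware, however, that the paper gives no proof of this statement at all: it is imported verbatim from Garvan's paper as equations (1.17) and (1.18) and used as a black box, so there is no internal argument to compare against. Your proposal ultimately does the same thing, since the one step where all the work lies --- the $5$-dissection of $\mathcal{C}(-\zeta_5^c;\tau)$ as a theta quotient showing the coefficient of $q^{5n+4}$ vanishes --- is deferred to the same citation. Note also the circularity in your closing sentence: you cannot justify the vanishing claim by pointing to equations (1.17)--(1.18), because your own filter argument shows those equations are \emph{equivalent} to the vanishing claim; what can legitimately be cited is Garvan's dissection computation, not its restatement. So as written, your argument is a correct (and useful) reduction plus the same external citation the paper relies on --- no less complete than the paper's treatment, but not an independent proof; making it one would require actually carrying out the theta-quotient dissection for the progression $5n+4$.
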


Additionally, in order to utilize Theorem~\ref{Lem: unimodality}, we need the following result of Ji and Zang \cite{JiZang2018} related to crank unimodality.

\begin{Thm}[Theorem 1.7 of \cite{JiZang2018}]\label{Thm: crank unimodality}
	For $n \ge 44$ and $1 \le m \le n-1$, 
	\begin{equation*}
		M(m-1,n) \ge M(m,n).
	\end{equation*}
\end{Thm}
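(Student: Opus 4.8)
The plan is to convert the monotonicity claim into a positivity statement for the coefficients of an explicit $q$-series extracted from a partial fraction expansion of the crank generating function \eqref{Eqn: crank product}, and then to prove positivity by isolating a dominant term and controlling an alternating tail.

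First I would record the symmetry $M(m,n)=M(-m,n)$, which is immediate from the invariance of \eqref{Eqn: crank product} under $\zeta\mapsto\zeta^{-1}$. This reduces the theorem to showing that $M(m,n)$ is non-increasing in $m$ on the range $m\ge1$. The main algebraic input is the classical partial fraction expansion of \eqref{Eqn: crank product} in $\zeta$. Writing $P(q):=\prod_{j\ge1}(1-q^j)$, so that $1/P(q)=\sum_{n\ge0}p(n)q^n$, one has
\[
\prod_{j=1}^\infty\frac{1-q^j}{(1-\zeta q^j)(1-\zeta^{-1}q^j)}=\frac{1-\zeta}{P(q)}\sum_{n\in\Z}\frac{(-1)^n q^{\binom{n+1}{2}}}{1-\zeta q^n}.
\]
Extracting the coefficient of $\zeta^m$ for $m\ge1$ annihilates the terms with $n\le0$ and gives
\[
\sum_{n\ge0}M(m,n)q^n=\frac{1}{P(q)}\sum_{j\ge1}(-1)^j\left(q^{\binom{j+1}{2}+jm}-q^{\binom{j+1}{2}+j(m-1)}\right).
\]
Taking the difference of the expressions for $m-1$ and $m$ (valid for $m\ge2$; the boundary case $m=1$, where the coefficient of $\zeta^0$ carries an extra term, is handled separately) collapses the right-hand side to
\[
\sum_{n\ge0}\big(M(m-1,n)-M(m,n)\big)q^n=\frac{1}{P(q)}\sum_{j\ge1}(-1)^{j+1}(1-q^j)^2\,q^{\frac{j(j-3)}{2}+jm}.
\]

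It remains to show the coefficient of $q^n$ on the right is non-negative for $n\ge44$ and $1\le m\le n-1$. The exponents $\tfrac{j(j-3)}{2}+jm$ strictly increase in $j$ (consecutive difference $j+m-1$), so only finitely many $j$ affect $[q^n]$, the signs alternate, and the leading $j=1$ block equals $(1-q)^2q^{m-1}/P(q)$, whose coefficients are the second differences $p(n-m+1)-2p(n-m)+p(n-m-1)\ge0$ by convexity of $p$. I would then run a dominance argument: group the tail $j\ge2$ into consecutive pairs and bound the coefficient of each negative contribution against the leading block, reducing the problem to inequalities among shifted values of $p$, which I would settle using the monotonicity of $p$ together with its Hardy--Ramanujan asymptotics.

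The main obstacle is uniformity in $m$ together with pinning down the exact threshold $44$. When $m$ is close to $n$ only a handful of $p$-values survive and the estimate is immediate, but for small $m$ the alternating tail is long and the cancellation among the $(1-q^j)^2$-blocks is delicate, so the asymptotic bound (valid for all large $n$) must be complemented by a direct finite verification to cover the remaining small cases and to certify that the transition occurs precisely at $n=44$.
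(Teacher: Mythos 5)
First, a framing point: the paper never proves this theorem --- it is imported verbatim as Theorem 1.7 of Ji--Zang \cite{JiZang2018}, a substantial paper in its own right, and the present authors explicitly regard the exactly analogous rank statement (Conjecture~\ref{Conj: rank unimodality}) as out of reach with current methods. So your proposal must be measured against Ji--Zang's proof, not against anything internal to this paper. Your algebraic reduction is correct and is the natural starting point: the partial-fraction expansion of \eqref{Eqn: crank product} is classical, the extraction of $[\zeta^m]$ for $m\ge1$ is right (it reproduces Garvan's Theorem 7.19 including the factor $1-q^j$, which incidentally is dropped in the transcription \eqref{Eqn: crank1}), and your difference identity for $m\ge2$, upon expanding $(1-q^j)^2$ against $\prod_{k\ge1}(1-q^k)^{-1}$, states that
\begin{equation*}
M(m-1,n)-M(m,n)=\sum_{j\ge1}(-1)^{j-1}A_j,\qquad A_j:=p(n-e_j)-2p(n-e_j-j)+p(n-e_j-2j),
\end{equation*}
where $e_j:=\tfrac{j(j-3)}{2}+jm$.

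The genuine gap is the positivity step, and the mechanism you propose for it provably fails. The $j=1$ block does not dominate, and the $A_j$ are not decreasing: for fixed $m$ and large $n$ one has, heuristically, $A_j\approx \frac{\pi^2 j^2}{6(n-e_j)}\,p(n-e_j)$, so $A_{j+1}/A_j\to\left(\frac{j+1}{j}\right)^2>1$; the terms grow before they decay. Concretely, at $m=2$, $n=100$ (well above the threshold, so nothing anomalous): $A_1=2{,}064{,}533$, $A_2=5{,}938{,}269$, $A_3=8{,}553{,}111$, $A_4=8{,}614{,}504$, $A_5=6{,}692{,}669$, and the full alternating sum (the terms vanish for $j\ge14$) equals $57{,}669$. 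So already the first pairing $A_1-A_2$ is negative, the partial sums oscillate with magnitudes in the millions, and the final answer is smaller than the largest term by a factor of roughly $150$. No termwise dominance, consecutive-pairing, convexity, or monotonicity-of-$p$ argument can detect cancellation this fine, and Hardy--Ramanujan asymptotics without error terms that are explicit and uniform in both $m$ and $n$ are useless here; obtaining such uniform control (in the spirit of \cite{BringmannDousse2016, DousseMertens2015}, but sharp enough to separate consecutive crank counts over the whole range $1\le m\le n-1$), or replacing analysis by a combinatorial injection, is precisely the hard content of \cite{JiZang2018}. Two smaller omissions: the case $m=1$ (i.e., $M(0,n)\ge M(1,n)$), where the $\zeta^0$-coefficient also receives contributions from the $n\le0$ terms of the bilateral series, is deferred in your sketch but never carried out; and the ``direct finite verification'' needed to certify the threshold $n=44$ is likewise asserted rather than performed.
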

Unfortunately, $\text{crank}_n(\zeta)$ is not actually unimodal for $n \ge 44$ because $M(n-1, n) = 0$ and $M(n,n) = 1$ for $n \ge 2$. In order to see this, we have the following result, which is needed below in the paper.
\begin{Lemma}\label{Lem: constant crank}
	For  fixed $k\in\N$, the sequence $M(n-k,n)$ is constant for $n \ge 2k$.
\end{Lemma}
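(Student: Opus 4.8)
The plan is to compute $M(n-k,n)$ directly from the combinatorial definition of the crank and to show that, once $n \ge 2k$, only one of the two branches in that definition can produce the value $n-k$, leaving a count that visibly does not depend on $n$. I would split the partitions $\lambda \vdash n$ with $\mathrm{crank}(\lambda)=n-k$ according to whether $\omega(\lambda)=0$ or $\omega(\lambda)>0$, and treat the two cases separately.

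In the case $\omega(\lambda)=0$ (no parts equal to $1$) we have $\mathrm{crank}(\lambda)=\ell(\lambda)$, so the condition becomes $\ell(\lambda)=n-k$. Deleting one copy of the largest part sends such a $\lambda$ to a partition of $k$ all of whose parts are $\ge 2$ and $\le n-k$; adjoining a part $n-k$ is the inverse. Since any partition of $k$ has all parts $\le k$, and $k \le n-k$ exactly when $n \ge 2k$, the bound ``$\le n-k$'' is automatic in our range, so this is a bijection onto the set of partitions of $k$ into parts $\ge 2$. The cardinality of that set is manifestly independent of $n$.

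In the case $\omega(\lambda)>0$ I would show the branch is empty for $n \ge 2k$. Writing $\omega:=\omega(\lambda)\ge 1$ and $\mu:=\mu(\lambda)$, each of the $\mu$ parts exceeding $\omega$ is at least $\omega+1$ and the $\omega$ ones contribute $\omega$, whence $n \ge \omega + \mu(\omega+1)$. Imposing $\mathrm{crank}(\lambda)=\mu-\omega=n-k$ and substituting $\mu=n-k+\omega$ gives, after rearranging, $k \ge 2\omega + (n-k)\omega + \omega^2$. Using $\omega \ge 1$ and $n-k \ge k \ge 0$ this forces $k \ge (n-k)+2$, i.e. $n \le 2k-2$, contradicting $n \ge 2k$. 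Combining the two cases, for $n \ge 2k$ we obtain $M(n-k,n)=\#\{\lambda \vdash k : \text{all parts} \ge 2\}$, a quantity free of $n$, which proves the lemma.

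The only place requiring genuine care is the second case: one must verify that partitions containing $1$'s cannot attain the large crank value $n-k$, and the threshold $2k$ is exactly what the estimate $n \ge \omega+\mu(\omega+1)$ yields when the lower-order terms $2\omega$ and $\omega^2$ are retained. The first case is a routine bijection, the sole subtlety being the boundary $n=2k$, where a reduced part may equal $n-k$; since equality with the largest part is permitted this causes no difficulty. I would also note the consistency checks $k=0$ giving $M(n,n)=1$ and $k=1$ giving $M(n-1,n)=0$, matching the values recorded just before the lemma.
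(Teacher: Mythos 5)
Your proof is correct, and it takes a genuinely different route from the paper. The paper argues analytically: it quotes Garvan's generating-function formula $\sum_{n\ge 0} M(m,n)q^n = \frac{1}{(q)_\infty}\sum_{n\ge 1}(-1)^{n-1}q^{\frac{n(n-1)}{2}+mn}$ (Theorem 7.19 of \cite{Garvan1988}), subtracts the shifted series for $m+1$, observes that the difference has no terms of order $\le 2m$, and deduces the shift identity $M(m+1,n+1)=M(m,n)$ for $n\le 2m$, which gives the lemma upon setting $m=n-k$. You instead work combinatorially from the two branches of the crank definition: the branch $\omega(\lambda)>0$ is empty for $n\ge 2k$, since $n\ge \omega+\mu(\omega+1)$ together with $\mu-\omega=n-k$ forces $n\le 2k-2$ (your estimate is right; note that the crude bound $k\ge(n-k)\omega$ alone would miss the boundary case $n=2k$, $\omega=1$, so retaining the lower-order terms is indeed necessary), while the branch $\omega(\lambda)=0$ is in bijection with partitions of $k$ into parts $\ge 2$ via deleting or adjoining a part of size $n-k$, the hypothesis $n\ge 2k$ being exactly what makes the adjoined part weakly largest. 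One pedantic point you leave implicit is that $M(m,n)$ agrees with the combinatorial crank count only for $n\ne 1$, but since $n\ge 2k\ge 2$ this never intervenes. Your approach buys something the paper's does not: it is self-contained (no external identity needed) and it identifies the constant value explicitly as the number of partitions of $k$ with no part equal to $1$, i.e.\ $p(k)-p(k-1)$, which immediately recovers the special cases $M(n-1,n)=0$ and $M(n-2,n)=1$ used elsewhere in the paper. The paper's route is shorter given the cited formula and produces the reusable shift identity $M(m+1,n+1)=M(m,n)$ for $n\le 2m$ in a form that requires no case analysis.
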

\begin{rmk*}
As alluded to above, Lemma \ref{Lem: constant crank} proves that $\text{crank}_n(\zeta)$ is not unimodal. We see in the proof of Theorem~\ref{Thm: crank conjectures} below that $\text{crank}_{\ell,n}^*(\zeta)$ is unimodal, thus illustrating the need for the modified crank function that Stanton provides.
\end{rmk*}

\begin{proof}[Proof of Lemma \ref{Lem: constant crank}]
	We utilize the following summation formula for $m \in \N$ \cite[Theorem 7.19]{Garvan1988}
	\begin{equation}\label{Eqn: crank1}
		\sum\limits_{n=0}^\infty M(m,n)q^n = \frac{1}{\prod_{k=0}^\infty (1-q^k)} \sum\limits_{n=1}^\infty (-1)^{n-1} q^{\frac{n(n-1)}{2} + mn}.
	\end{equation}
	By replacing $m$ by $m+1$ and dividing both sides by $q$, we find that
	\begin{equation*}
		\sum\limits_{n=-1}^\infty M(m+1,n+1)q^n = \frac{1}{\prod_{k=0}^\infty (1-q^k)} \sum\limits_{n=1}^\infty (-1)^{n-1} q^{\frac{n(n-1)}{2} + (m+1)n-1}.
	\end{equation*}
	However, since $m+1 > 0$, we may conclude that $M(m+1,0) = 0$, so
	\begin{equation}\label{Eqn: crank2}
		\sum\limits_{n=0}^\infty M(m+1,n+1)q^n = \frac{1}{\prod_{k=0}^\infty (1-q^k)} \sum\limits_{n=1}^\infty (-1)^{n-1} q^{\frac{n(n-1)}{2} + (m+1)n-1}.
	\end{equation}
	Subtracting \eqref{Eqn: crank1} from \eqref{Eqn: crank2} yields
	\begin{multline}\label{Eqn: diff of cranks}
		\sum\limits_{n=0}^\infty (M(m+1, n+1) - M(m,n)) q^n\\
		=\frac{1}{\prod_{k=0}^\infty (1-q^k)} \sum\limits_{n=1}^\infty (-1)^{n-1} \left( q^{\frac{n(n-1)}{2} + (m+1)n-1} - q^{\frac{n(n-1)}{2} + mn}\right).
	\end{multline}
	We now claim that the $j$-th Fourier coefficient vanishes for $j \le 2m$. Note that the term $n = 1$ vanishes in \eqref{Eqn: diff of cranks}. On the other hand for $n \ge 2$, 
	\begin{equation*}
		\frac{n(n-1)}{2} + (m+1)n-1 \ge \frac{n(n-1)}{2} + mn \ge 2m+1, 
	\end{equation*}
	so the smallest power of $q$ in \eqref{Eqn: diff of cranks} is at least $2m+1$. Comparing coefficients on both sides of the equality in \eqref{Eqn: diff of cranks}, this tells us that $M(m+1,n+1) = M(m,n)$ for $n \le 2m$. Replacing $m$ by $n-k$ yields the result.
\end{proof}

\section{Proof of the main results}\label{Section: proofs}

\subsection{A general result}

We first need the following lemma, where a Laurent polynomial $f(\zeta)$ is called \textit{symmetric} if $f(\zeta^{-1})=f(\zeta) $. 

\begin{Lemma}\label{Lem: unimodality}
	Let $f(\zeta)$ be a symmetric unimodal Laurent polynomial that is divisible by $\Phi_\ell(\zeta)$ for an odd prime $\ell$. Then the coefficients of the Laurent polynomial $\frac{f(\zeta)}{\Phi_\ell(\zeta)}$ are non-negative.
\end{Lemma}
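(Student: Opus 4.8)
The plan is to show that the quotient $g(\zeta):=\frac{f(\zeta)}{\Phi_\ell(\zeta)}$ has non-negative coefficients by exploiting the interplay between symmetry and unimodality of $f$. First I would normalize the situation: multiplying by a power of $\zeta$ if necessary (which does not affect divisibility by $\Phi_\ell$ by the coprimality argument already used in Lemma~\ref{Lem:equidistribution}), I may assume $f(\zeta)=\sum_{j=0}^{N}a_j\zeta^j$ is an honest polynomial with $a_0\ne0$. The symmetry $f(\zeta^{-1})=f(\zeta)$ then becomes the palindrome condition $a_j=a_{N-j}$. Combined with unimodality, this forces the coefficient sequence to rise to a peak at the center and descend symmetrically, so that $a_j\le a_{j+1}$ for $j<\frac{N}{2}$ and $a_j\ge a_{j+1}$ for $j\ge\frac{N}{2}$.

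The key computational step is to write down the coefficients of $g$ explicitly via the long division $f(\zeta)=g(\zeta)\Phi_\ell(\zeta)=g(\zeta)(1+\zeta+\cdots+\zeta^{\ell-1})$. Writing $g(\zeta)=\sum_b c_b\zeta^b$, comparing coefficients gives the recurrence $a_j=\sum_{i=0}^{\ell-1}c_{j-i}$, i.e.\ each $a_j$ is a window sum of $\ell$ consecutive coefficients of $g$. Inverting this telescoping relation yields $c_b - c_{b-\ell} = a_b - a_{b-1}$ (interpreting $c$ as $0$ outside its support), and more usefully, summing shows that the coefficient $c_b$ can be recovered as a signed partial sum of the $a_j$. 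Concretely, I would establish the formula $c_b = \sum_{j\ge 0}(a_{b-j\ell}-\cdots)$ or, most cleanly, express $c_b$ as a difference of an ``every-$\ell$-th'' partial sum of the $a_j$. The point is that the sign of $c_b$ is controlled by differences $a_j-a_{j-1}$, whose signs are exactly what unimodality pins down.

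The main idea then is to use symmetry to reduce to the ascending half. Since $g=f/\Phi_\ell$ inherits symmetry from $f$ (because $\Phi_\ell(\zeta^{-1})=\zeta^{-(\ell-1)}\Phi_\ell(\zeta)$, so $\Phi_\ell$ is itself essentially palindromic), the coefficient sequence $(c_b)$ is also palindromic. Thus it suffices to prove $c_b\ge0$ for $b$ in the lower half of the support. For such $b$, the relevant window sums $a_j$ involve indices in the ascending region where $a_j-a_{j-1}\ge0$, and the telescoped expression for $c_b$ becomes a sum of non-negative increments minus controlled terms that the symmetry forces to cancel favorably. I expect the cleanest route is to prove $c_b\ge c_{b-1}$ for $b$ below the center directly from $c_b-c_{b-1}=\sum_{i}(a_{b-i}-a_{b-1-i})$ telescoping to $a_b - a_{b-\ell}$-type expressions, then combine with $c_{b_{\min}-1}=0$ to conclude non-negativity by induction upward to the center, using symmetry to handle the upper half.

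The hard part will be bookkeeping the boundary/index ranges so that the ``window sum'' identity and its inversion are valid exactly where needed, and verifying that the contributions straddling the peak of the unimodal sequence do not spoil the sign — in other words, confirming that the partial sums of increments $a_j-a_{j-1}$ that appear in $c_b$ stay non-negative precisely because we have restricted to $b$ on the ascending side and then used the palindrome symmetry to transfer the conclusion to the descending side. I would be careful that $\ell$ being prime is used only through Lemma~\ref{Lem:equidistribution}'s irreducibility (guaranteeing $g$ is a genuine Laurent polynomial), while the positivity argument itself needs only that $\Phi_\ell$ is the palindromic polynomial $1+\zeta+\cdots+\zeta^{\ell-1}$; odd $\ell$ ensures the symmetry centers align cleanly.
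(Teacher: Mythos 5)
Your proposal is correct and follows essentially the same route as the paper: your identity $c_b=\sum_{j\ge 0}\left(a_{b-j\ell}-a_{b-j\ell-1}\right)$ is exactly the paper's expansion $\frac{f(\zeta)}{\Phi_\ell(\zeta)}=\frac{(1-\zeta)f(\zeta)}{1-\zeta^{\ell}}=\left(1+\zeta^{\ell}+\zeta^{2\ell}+\cdots\right)(1-\zeta)f(\zeta)$, the sign analysis of consecutive differences via symmetry plus unimodality is identical, and the only (cosmetic) divergence is that you dispatch the descending half by palindromy of the quotient, whereas the paper telescopes against a vanishing coefficient $b_{m+\ell k}=0$ far to the right. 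One caution: your suggested alternative of proving monotonicity $c_b\ge c_{b-1}$ and inducting up to the center is not needed and can fail at the center (for $\ell=3$ and $f(\zeta)=1+\zeta+2\zeta^2+\zeta^3+\zeta^4$, which is symmetric, unimodal, and divisible by $\Phi_3$, the quotient is $1+0\cdot\zeta+\zeta^2$), and since palindromy fixes the central coefficient that case cannot be recovered by symmetry either, so stick with your lacunary-sum argument, which does cover the center.
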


\begin{rmk*}
	Note that if $f(\zeta)$ is strictly unimodal, then the coefficients of $\frac{f(\zeta)}{\Phi_\ell (\zeta)}$ are positive.
\end{rmk*}

\begin{proof}[Proof of Lemma \ref{Lem: unimodality}]
Write 
$$
(1-\zeta)f(\zeta) =: \sum_m a_m \zeta^m.
$$
 By the symmetry of $f(\zeta)$, we have
\begin{equation*}
	\sum_m a_m \zeta^{-m} 
	= \left(1-\zeta^{-1}\right)f\left(\zeta^{-1}\right) = -\zeta^{-1}(1-\zeta)f(\zeta) = -\sum_m a_m \zeta^{m-1}.
\end{equation*}
Comparing coefficients, we conclude that 
\begin{equation}\label{amm}
	a_m = -a_{-m+1}.
\end{equation}
 We next show that 
\begin{equation}\label{Eqn: a_m positivity}
	a_m \ge 0 \text{ for } m \le 0 \text{ and } a_m \le 0 \text{ for } m \ge 1.
\end{equation}
By \eqref{amm}, we only need to prove $a_m \le 0$ for $m\ge 1$. If $f(\zeta) = \sum_m c_m \zeta^m$, then comparing coefficients for $(1-\zeta)f(\zeta)$ yields  $a_m = c_m - c_{m-1}$. By the unimodality of $f(\zeta)$, we conclude that $c_m \le c_{m-1}$ for $m \ge 1$ yielding $a_m\le0$ as desired.

Now, we consider $\frac{f(\zeta)}{\Phi_\ell(\zeta)}$. Note that $(1-\zeta)\Phi_\ell(\zeta) = 1-\zeta^\ell$. Thus for $|\zeta| < 1$,
\begin{align*}
	\sum_m b_m \zeta^m := \frac{f(\zeta)}{\Phi_\ell(\zeta)}  = \frac{(1-\zeta)f(\zeta)}{1-\zeta^\ell} = \left( 1 + \zeta^{\ell} + \zeta^{2\ell} + \dots \right) \sum_m a_m \zeta^m.
\end{align*}
Comparing coefficients, we find that $b_m = \sum_{j=0}^\infty a_{m-\ell j}$. By \eqref{Eqn: a_m positivity}, the non-negativity of $b_m$ is then clear for $m \le 0$. For a fixed $m \ge 1$, let $k$ be sufficiently large so that $b_{m + \ell k} = 0$. Such a $k$ exists because $\Phi_\ell(\zeta)\mid f(\zeta)$ implies that $b_j = 0$ for sufficiently large $j$. Then 
\begin{equation*}
	b_m = b_{m} - b_{m+\ell k} = \sum_{j=0}^\infty a_{m-\ell j} -  \sum_{j=0}^\infty a_{m+\ell k-\ell j} = -\sum_{j=0}^{k-1} a_{m+\ell k - \ell j} \ge 0
\end{equation*}
by \eqref{Eqn: a_m positivity}. Thus, we conclude $b_m \ge 0$ for all $m$.
\end{proof}

In order to prove Conjecture~\ref{Conj: Stanton} (2), we require a slight modification of Lemma~\ref{Lem: unimodality}. The proof is similar to the proof of Lemma~\ref{Lem: unimodality}.

\begin{Lemma}\label{Lem: modified unimodality}
	Let $f(\zeta)$ be a symmetric Laurent polynomial that is divisible by $\Phi_\ell(\zeta^2)$ for a prime $\ell$ and such that $[\zeta^{m-1}]f(\zeta) \ge [\zeta^{m+1}]f(\zeta)$ for $m\in\N$. Then $\frac{\zeta^{\ell - 1}f(\zeta)}{\Phi_\ell(\zeta^2)}$ is a symmetric Laurent polynomial with non-negative coefficients.
\end{Lemma}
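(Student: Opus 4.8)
The plan is to mirror the proof of Lemma~\ref{Lem: unimodality}, replacing the factor $1-\zeta$ by $1-\zeta^2$ so that the telescoping becomes compatible with $\Phi_\ell(\zeta^2)$, whose support lies on even exponents. Writing $f(\zeta)=\sum_m c_m\zeta^m$, I would set
\[
(1-\zeta^2)f(\zeta) =: \sum_m a_m\zeta^m, \qquad\text{so that}\qquad a_m = c_m - c_{m-2}.
\]
First I would exploit the symmetry $f(\zeta^{-1})=f(\zeta)$ as in Lemma~\ref{Lem: unimodality}: since $(1-\zeta^{-2})f(\zeta^{-1}) = -\zeta^{-2}(1-\zeta^2)f(\zeta)$, comparing coefficients yields the reflection relation $a_m = -a_{2-m}$ (in particular $a_1=0$). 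Next, the hypothesis $[\zeta^{m-1}]f(\zeta)\ge[\zeta^{m+1}]f(\zeta)$ for $m\in\N$ says precisely that $c_j\ge c_{j+2}$ for all $j\ge0$, which gives $a_m = c_m - c_{m-2}\le 0$ for $m\ge2$; combined with the reflection relation this produces the sign pattern $a_m\ge0$ for $m\le0$, $a_1=0$, and $a_m\le0$ for $m\ge2$.

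Then I would pass to the quotient. Using $(1-\zeta^2)\Phi_\ell(\zeta^2)=1-\zeta^{2\ell}$, I expand for $|\zeta|<1$
\[
\frac{f(\zeta)}{\Phi_\ell(\zeta^2)} = \frac{(1-\zeta^2)f(\zeta)}{1-\zeta^{2\ell}} = \left(1+\zeta^{2\ell}+\zeta^{4\ell}+\dots\right)\sum_m a_m\zeta^m =: \sum_m d_m\zeta^m,
\]
so that $d_m=\sum_{j\ge0}a_{m-2\ell j}$. Non-negativity of the $d_m$ follows from the sign pattern exactly as in the earlier lemma: for $m\le0$ every index $m-2\ell j$ is $\le0$, so each summand is $\ge0$; for $m\ge1$ I use that $\Phi_\ell(\zeta^2)\mid f(\zeta)$ forces $d_j=0$ for $j$ large, together with the telescoping identity $d_m = d_m - d_{m+2\ell k} = -\sum_{i=1}^{k}a_{m+2\ell i}$ for $k$ large, in which every index $m+2\ell i\ge 2$ lands in the region where $a\le0$. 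Since $[\zeta^m]\bigl(\zeta^{\ell-1}f(\zeta)/\Phi_\ell(\zeta^2)\bigr)=d_{m-\ell+1}$, this yields the desired non-negativity of all coefficients.

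Finally, for symmetry I would argue directly that the shift by $\zeta^{\ell-1}$ is exactly what is needed: from $\Phi_\ell(\zeta^{-2})=\zeta^{-2(\ell-1)}\Phi_\ell(\zeta^2)$ and $f(\zeta^{-1})=f(\zeta)$ one gets $f(\zeta^{-1})/\Phi_\ell(\zeta^{-2}) = \zeta^{2(\ell-1)}f(\zeta)/\Phi_\ell(\zeta^2)$, whence $h(\zeta):=\zeta^{\ell-1}f(\zeta)/\Phi_\ell(\zeta^2)$ satisfies $h(\zeta^{-1})=h(\zeta)$. The only genuinely delicate point is the parity bookkeeping: the hypothesis is a \emph{split} unimodality (the even- and odd-indexed coefficients of $f$ are each non-increasing away from the center), and one must verify it meshes with the even support of $\Phi_\ell(\zeta^2)$. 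The structural facts that make everything fit are the reflection $a_m=-a_{2-m}$ and the observation that the telescoping indices $m+2\ell i$ never equal the neutral index $1$, so the vanishing coefficient $a_1=0$ causes no interference.
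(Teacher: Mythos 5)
Your proof is correct and follows essentially the same route as the paper's: the paper multiplies by $(\zeta^{-1}-\zeta)=\zeta^{-1}(1-\zeta^2)$ rather than $(1-\zeta^2)$ (so its coefficients $d_m$ are your $a_{m+1}$, with sign pattern centered at $0$ instead of $1$), then derives the same reflection relation, geometric-series expansion against $1-\zeta^{2\ell}$, and telescoping argument. Your explicit verification of the symmetry of $\zeta^{\ell-1}f(\zeta)/\Phi_\ell(\zeta^2)$ via $\Phi_\ell(\zeta^{-2})=\zeta^{-2(\ell-1)}\Phi_\ell(\zeta^2)$ is if anything more complete than the paper's one-line remark.
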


\begin{proof}
	We write
	\begin{equation*}
		(\zeta^{-1}-\zeta)	f(\zeta) =: \sum\limits_{m = -N}^{N} d_m \zeta^m \quad \text{and} \quad \frac{\zeta^{\ell - 1}f(\zeta)}{\Phi_\ell(\zeta^2)} =: \sum_{k=-N+\ell}^{N-\ell} e_k \zeta^k.
	\end{equation*}
	By the symmetry of $f(\zeta)$, we have
	\begin{equation*}
		\sum_m d_m \zeta^{-m} = \left(\zeta-\zeta^{-1}\right)f\left(\zeta^{-1}\right) = -\left(\zeta^{-1}-\zeta\right)f(\zeta) = -\sum_m d_m \zeta^{m}.
	\end{equation*}
	Comparing coefficients, we conclude that 
	\begin{align}\label{Eqn: symm}
		d_m = -d_{-m}.
	\end{align}
	Note that in particular $d_0 = -d_0$, so $d_0 = 0$. We next show that 
	\begin{equation}\label{Eqn: a_m second}
		d_m \ge 0 \text{ for } m \le 0 \text{ and } d_m \le 0 \text{ for } m \ge 1.
	\end{equation}
	By \eqref{Eqn: symm}, we only need to prove that $d_m \le 0$ for $m\ge 1$. Writing $f(\zeta) = \sum_m c_m \zeta^m$, then comparing coefficients for $(\zeta^{-1}-\zeta)f(\zeta)$ yields the equality 
	$$d_m = c_{m+1} - c_{m-1}.$$
	Additionally, by assumption of the lemma we have that
	$$ c_{m-1} = \left[\zeta^{m-1}\right]f(\zeta) \ge \left[\zeta^{m+1}\right] f(\zeta) = c_{m+1},$$
	from which $d_m\le 0$ follows.
	
	We next note that $\frac{\zeta^{\ell - 1}f(\zeta)}{\Phi_\ell(\zeta^2)}$ is symmetric since $f(\zeta)$ and $\zeta^{\frac{1-\ell}{2}} \Phi_\ell(\zeta)$ are.

	Next we find a formula for $e_k$ in terms of $d_m$. For $|\zeta| < 1$, we have
	\begin{align*}
		\sum_k e_k \zeta^k = \frac{\zeta^\ell\left(\zeta_{-1}-\zeta\right)f(\zeta)}{1-\zeta^{2\ell}} = \zeta^{\ell}\left( 1 + \zeta^{2\ell} + \zeta^{4\ell} + \dots \right) \sum_m d_m \zeta^m.
	\end{align*}
	Comparing coefficients, we obtain that 
	\begin{equation*}%\label{Eqn: ea}
			e_k = \sum_{j=0}^\infty d_{k-\ell (2j+1)}.
	\end{equation*}
	By \eqref{Eqn: a_m second}, the non-negativity of $e_k$ is then clear for $k \le \ell$ since $k - \ell(2j+1) \le 0$ for such values. For a fixed $k \ge \ell + 1$, let $r$ be sufficiently large so that $e_{k + 2\ell r} = 0$. Such an $r$ exists because $\Phi_\ell(\zeta^2)\mid f(\zeta)$ implies that $e_n = 0$ for sufficiently large $n$. Then
	\begin{equation*}
		e_k = e_{k} - e_{k+2\ell r} = \sum_{j=0}^\infty d_{k-\ell (2j+1)} -  \sum_{j=0}^\infty d_{k+2\ell r-\ell(2j+1)} = -\sum_{j=0}^{k-1} d_{k+2\ell(r-j) - \ell} \ge 0
	\end{equation*}
	by \eqref{Eqn: a_m second} since all of the indices in the final sum are negative. Thus, we conclude $e_k \ge 0$ for all $m$.
	\end{proof}

\subsection{Proof of Theorem~\ref{Thm: crank conjectures}}

\begin{proof}
	We begin with the proof of Conjecture~\ref{Conj: Stanton} (3). The polynomials under consideration are of the form \eqref{Eqn: modified crank def}  for $\ell \in \{5,7,11\}$. We check the conditions of Lemma~\ref{Lem: unimodality} for these polynomials. The symmetry of \eqref{Eqn: modified crank def} follows from the fact that $M(m,n) = M(-m,n)$ for all $m,n \in \Z$, which can be seen from the symmetry of $\mathcal{C}(z;\tau)$ under $z\mapsto-z$.  Now, we check that \eqref{Eqn: modified crank def} is divisible by $\Phi_\ell(\zeta)$. By Lemma~\ref{Lem:equidistribution}, the divisibility of $\text{crank}_{\ell n + \beta}(\zeta)$ by $\Phi_\ell(\zeta)$ is equivalent to
	\begin{equation*}
		M(0, \ell; \ell n + \beta) = M(1, \ell; \ell n + \beta) = \ldots = M(\ell -1, \ell; \ell n + \beta).
	\end{equation*}
	For $\ell \in \{5,7,11\}$, this is simply the statement of the well-known equidistribution of the crank (see \cite[Vector-crank Theorem]{AndrewsGarvan1988}). On the other hand, 
	\begin{align*}
		\zeta^{\ell n + \beta - \ell} - \zeta^{\ell n + \beta} + \zeta^{\ell -\ell n - \beta} - \zeta^{-\ell n - \beta} &= \left(\zeta^{\ell n + \beta - \ell}-\zeta^{-\ell n - \beta}\right)\left(1-\zeta^{\ell}\right) \\
		&= \left(\zeta^{\ell n + \beta - \ell}-\zeta^{-\ell n - \beta}\right)(1-\zeta)\Phi_\ell(\zeta)
	\end{align*}
	proves the divisibility of the remaining part of \eqref{Eqn: modified crank def}.
	Now, we show the unimodality of $\text{crank}_{\ell n + \beta}^*(\zeta)$ for $\ell n + \beta \ge 44$. By symmetry, it suffices to show that the coefficients are decreasing for non-negative powers of $\zeta$. First, note that for $m \ge \ell n + \beta + 1$,
	\begin{align*}
		[\zeta^m]\text{crank}_{\ell,n}^*(\zeta) &= M(m, \ell n +\beta) = 0.
	\end{align*}
	Additionally, if $m = \ell n + \beta$, then
	\begin{align*}
		\left[\zeta^{\ell n + \beta}\right]\text{crank}_{\ell,n}^*(\zeta) &= M(\ell n + \beta, \ell n + \beta) - 1 = 0.
	\end{align*}
	Now, for $0 \le m \le \ell n + \beta - 1$,  we note that $[\zeta^m]\text{crank}_{\ell,n}^*(\zeta) = M(m, \ell n + \beta)$
	except if $m = \ell n + \beta - \ell$. Thus, by Theorem~\ref{Thm: crank unimodality},
	\begin{equation*}
		\left[\zeta^m\right]\text{crank}_{\ell, n}^*(\zeta) - \left[\zeta^{m+1}\right]\text{crank}_{\ell,n}^*(\zeta) \ge M(m,\ell n+ \beta) - M(m+1,\ell n + \beta) \ge 0
	\end{equation*}
	for $0 \le m \le \ell n + \beta -2$ and $m \neq \ell n + \beta - \ell - 1$. In order to prove the inequality for $m = \ell n + \beta - \ell - 1$, we note for a fixed value $k$, the sequence $\{M(n-k,n)\}_{n=1}^\infty$ is constant for $n \ge 2k$ by Lemma~\ref{Lem: constant crank}, so it suffices to check that
	\begin{align*}
		\left[\zeta^{\ell n + \beta-\ell - 1}\right]&\text{crank}_{\ell,n}^*(\zeta) - \left[\zeta^{\ell n + \beta - \ell}\right]\text{crank}_{\ell,n}^*(\zeta) \\
		&= M(\ell n + \beta - \ell -1,\ell n + \beta) - M(\ell n + \beta-\ell,\ell n + \beta) - 1 \ge 0
	\end{align*}
	for $\ell \in \{ 5,7,11\}$ and $n = 22$, which the authors have checked by computer.  As a result, we have unimodality of $\text{crank}_{\ell,n}^*(\zeta)$ for $\ell n + \beta \ge 44$. For $\ell n + \beta < 44$, the result can be checked manually. Additionally, if $m = \ell n + \beta - 1$, then 
	{\small 
	\begin{equation*}
		\left[\zeta^{\ell n + \beta - 1}\right]\text{crank}_{\ell, n}^*(\zeta) - \left[\zeta^{\ell n + \beta}\right]\text{crank}_{\ell, n}^*(\zeta) = M(\ell n + \beta - 1, \ell n + \beta) - M(\ell n + \beta, \ell n + \beta) + 1 = 0,
	\end{equation*}
	}
	completing the proof of unimodality.

	We now move to the proof of Conjecture~\ref{Conj: Stanton} (2). In order to check the inequality condition of Lemma~\ref{Lem: modified unimodality},  note that $M(m,5n+4) \ge M(m+2,5n+4)$ for $0 \le m \le 5n+1$ and $5n+4 \ge 44$ by Theorem~\ref{Thm: crank unimodality} and can be checked manually for $5n+4 < 44$. As for $m = 5n+2$, it is easy to check that $M(5n+2,5n+4) = M(5n+4,5n+4) = 1$ for $5n+4 \ge 2$, and for $m \ge 5n+3$, the inequality follows from the fact that $M(5n+3, 5n+4) = 0$ and $M(m,5n+4) = 0$ for $m > 5n+4$. This proves that $[\zeta^m]\text{crank}_{5n+4}(\zeta) \ge [\zeta^{m+2}]\text{crank}_{5n+4}(\zeta)$ for $5n+4 \ge 2$. Additionally, the divisibility by $\Phi_5 (\zeta^2)$ follows directly from Lemma~\ref{Lem: modified equidistribution} and Theorem~\ref{Thm: crank mod 10}, so the result follows from Lemma~\ref{Lem: modified unimodality}.
\end{proof}

\subsection{Proof of Theorem~\ref{Thm: rank conjecture}}

\begin{proof}
We again use Lemma~\ref{Lem: unimodality}. The polynomials under consideration are given in \eqref{Eqn: modified rank def} for $\ell \in \{5,7\}$. The symmetry $N(m,n) = N(-m,n)$ can be seen by the invariance of $\mathcal{R}(z;\tau)$ under $z\mapsto-z$. The symmetry for the remaining terms in \eqref{Eqn: modified rank def} is clear. As for unimodality, the assumption of Conjecture~\ref{Conj: rank unimodality} means that it suffices to show that for $m \ge \ell n + \beta-3$,
\[
\left[\zeta^m\right]\text{rank}_{\ell, n}^*\left(\zeta\right) \ge \left[\zeta^{m+1}\right]\text{rank}_{\ell, n}^*\left(\zeta\right).
\]
 From the definition of the rank, it is easy to check that the partition $\ell n + \beta$ of $\ell n + \beta$ is the only partition with rank $\ell n + \beta -1$. Similarly, we may check that there are no partitions of rank $\ell n + \beta-2$ or of rank $m \ge \ell n + \beta$, while the partition $(\ell n + \beta-1,1)$ is the only partition of rank $\ell n + \beta -3$ for $n \ge 1$. Therefore, from the definition \eqref{Eqn: modified rank def}, we see that the unimodality of $\text{rank}_{\ell, n}^*(\zeta)$ holds. Finally, the divisibility of $\text{rank}_{\ell n + \beta}(\zeta)$ by $\Phi_\ell(\zeta)$ for $\ell \in \{5,7\}$ follows from the well-known equidistribution of the rank modulo $5$ and $7$ and from Lemma~\ref{Lem:equidistribution}. Additionally, 
\begin{equation*}
	\zeta_\ell^{\ell n + \beta -2} - \zeta_\ell^{\ell n + \beta -1} + \zeta_\ell^{2-\ell n - \beta} - \zeta_\ell^{1-\ell n - \beta} = \zeta_\ell^{\beta -2} - \zeta_\ell^{\beta - 1} +\zeta_\ell^{2-\beta} - \zeta_\ell^{1-\beta},
\end{equation*}
and by plugging in $5$ and $7$ for $\ell$, we can see that the latter term of \eqref{Eqn: modified rank def} vanishes~under $\zeta_{\ell}$ and hence is divisible by $\Phi_\ell(\zeta)$. Thus, Lemma~\ref{Lem: unimodality} completes the proof of Theorem~\ref{Thm: rank conjecture}. 
\end{proof}

\subsection{Sketch of proof of Theorem~\ref{Thm: Stanton for colored partitions}}\label{SectionStantonColored}

We provide only a sketch of the proof in order to avoid reintroducing the entire framework of \cite{RolenTrippWagner2020} for a simple modification of those results. We leave it to the interested reader to make the necessary changes to the proofs in \cite{RolenTrippWagner2020}. For such a reader, we point out that there are results analogous to Lemmas 4.1 and 4.2 that apply to $\mathcal{A}_k$ and $\mathcal{B}_k$. The first lemma allows one to analyze $\mathcal{A}_k$, which explains almost all of the congruences coming from Theorem~\ref{Thm: k-colored congruences}. Below, we use the notation $g(\zeta;q) \equiv h(\zeta;q)\pmod{\Phi_\ell(\zeta)}$ to mean that $\Phi_\ell(\zeta)$ divides $g(\zeta;q)-h(\zeta;q)$ in the ring $\Q[[q]][\zeta, \zeta^{-1}]$.

\begin{Lemma}\label{denomfactors1}
	Suppose that $\{\ell n+\delta_{k,\ell}\}_{n\in\N_0}$ is an arithmetic progression coming from Theorem~\ref{Thm: k-colored congruences} for $p_k(n)$ with $k + h = \ell t$ and $h \in \{4,6,8,10\}$ if $k$ is odd and $k \in \{4,6,8,10,14\}$ if $k$ is even. Then if $\phi_R(\bz;\tau)$ is the theta block associated to $h$ in  \cite[Table 1]{RolenTrippWagner2020}, then there is a choice of $a, b \in \Z$ such that if $\bz = (az, bz)$, then
	\begin{multline*}
		\phi_R(\bz;\tau) \left(\zeta^{\pm 2} q\right)_\infty \cdot \ldots \cdot \left(\zeta^{\pm \frac{k+\delta_{2\nmid k}}{2}}q\right)_\infty \left(\zeta^{\pm \left(\frac{k+\delta_{2\nmid k}}{2} + 1\right)}q\right)_\infty 
		\\
		\equiv q^{\frac{h}{24}}f(\zeta)(q)_\infty^{\delta_{2\nmid k}}\left(q^{\ell}; q^{\ell}\right)_\infty^t \pmod{\Phi_\ell(\zeta)}
	\end{multline*}
	for some $f(\zeta) \in \Q[\zeta^{\frac 12}, \zeta^{-\frac 12}]$.
\end{Lemma}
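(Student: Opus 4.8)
The plan is to follow the strategy of \cite{RolenTrippWagner2020} closely, reducing the claimed congruence to an application of the Macdonald identities after clearing denominators with a suitable theta block. First I would recall that the crank-type function $\mathcal{A}_k$ is, up to the overall factor $\mathcal{C}(0;\tau)^{\frac{k-\delta_{2\nmid k}}{2}}$, a product of $\mathcal{C}(a_j z;\tau)$ over the consecutive values $a_j = 2, 3, \ldots, \frac{k+\delta_{2\nmid k}}{2}+1$. Using the product formula \eqref{Eqn: crank product}, each factor $\mathcal{C}(a_j z;\tau)$ contributes a denominator $(1-\zeta^{a_j}q^n)(1-\zeta^{-a_j}q^n) = (\zeta^{\pm a_j}q)_\infty$-type product together with the numerator $(q)_\infty$. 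The point of multiplying by the theta block $\phi_R(\bz;\tau)$ with the specialization $\bz = (az,bz)$ is to supply exactly the extra theta-quotient factors needed so that, after grouping, the full product over the arguments $2,\ldots,\frac{k+\delta_{2\nmid k}}{2}+1$ becomes amenable to a sum-to-product transformation. So the first step is to write out the left-hand side explicitly as a product of Pochhammer symbols and identify which theta factors $\phi_R$ must contribute; the data of $h$ and the entry of \cite[Table 1]{RolenTrippWagner2020} dictate the correct $\phi_R$, and one must verify that the resulting argument vector is consistent with a valid specialization $(az,bz)$.

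Next I would carry out the reduction modulo $\Phi_\ell(\zeta)$. The key mechanism, exactly as in \cite{RolenTrippWagner2020}, is that working modulo $\Phi_\ell(\zeta)$ we may replace $\zeta$ by a primitive $\ell$-th root of unity $\zeta_\ell$, so that factors $(\zeta^{\pm j}q)_\infty$ with $j \equiv 0 \pmod{\ell}$ collapse, and the product of the remaining factors over a complete (or nearly complete) set of residues modulo $\ell$ telescopes via the Macdonald identity for the relevant affine root system into a single eta-type product. Concretely, after substituting $\zeta = \zeta_\ell$, the product over $j = 2, \ldots, \frac{k+\delta_{2\nmid k}}{2}+1$, together with the theta-block contribution, should reorganize so that the full $\ell$-dependence factors through $(q^\ell;q^\ell)_\infty^t$, with the power $t$ dictated by $k+h = \ell t$, and the residual $\zeta$-dependence is absorbed into the single Laurent polynomial $f(\zeta) \in \Q[\zeta^{\frac12},\zeta^{-\frac12}]$ and the prefactor $q^{\frac{h}{24}}(q)_\infty^{\delta_{2\nmid k}}$. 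The parity factor $(q)_\infty^{\delta_{2\nmid k}}$ arises precisely because for odd $k$ one of the $\mathcal{C}(0;\tau)$ factors in \eqref{Eqn: C_k} is left over after pairing, whereas for even $k$ the pairing is exact. The bookkeeping of the $q$-power $\frac{h}{24}$ comes from collecting the leading $q$-exponents in the theta block.

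The main obstacle I expect is the verification that the theta block $\phi_R$ associated to a given $h$ from \cite[Table 1]{RolenTrippWagner2020} provides exactly the right theta factors so that, after reduction modulo $\Phi_\ell(\zeta)$, the Macdonald identity applies cleanly and produces a genuine power $(q^\ell;q^\ell)_\infty^t$ rather than a more complicated mixed product. This is a delicate combinatorial matching between the index set $\{2,3,\ldots,\frac{k+\delta_{2\nmid k}}{2}+1\}$ for $\mathcal{A}_k$ and the arguments encoded in the theta block, and it must be checked case by case for the admissible values $h \in \{4,6,8,10\}$ (odd $k$) and $h \in \{4,6,8,10,14\}$ (even $k$); the excluded values $h \in \{14,26\}$ and $h=26$ are precisely those for which no such compatible theta block exists, which is why they are omitted from Theorem~\ref{Thm: Stanton for colored partitions}. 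The remaining steps—confirming that $f(\zeta)$ indeed lies in $\Q[\zeta^{\frac12},\zeta^{-\frac12}]$ and tracking the exact $q$-exponent—are routine once the correct theta block and specialization $(az,bz)$ are fixed, and I would handle them by direct expansion, mirroring the analogous computation in \cite[Lemma 4.1]{RolenTrippWagner2020}.
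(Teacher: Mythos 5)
Your overall route is the same as the paper's: the paper gives only a sketch of this lemma, asserting that it follows by repeating the proofs of Lemmas 4.1 and 4.2 of \cite{RolenTrippWagner2020} with the new index set $\{2,3,\dots,\frac{k+\delta_{2\nmid k}}{2}+1\}$, with the specialization $\bz=(az,bz)$ chosen so that the theta block supplies exactly the residue classes modulo $\ell$ that this index set misses, after which reduction modulo $\Phi_\ell(\zeta)$ collapses complete residue systems into $(q^\ell;q^\ell)_\infty$ factors. One organizational caveat: the paper stresses that the missing residues depend on the value of $k$ (not only on $h$), so your ``case by case over the admissible $h$'' check must really be a choice of $a,b$ varying with $k \bmod \ell$, exactly as you allude to when matching the index set against the theta block.

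However, one step of your sketch is wrong as stated: the factor $(q)_\infty^{\delta_{2\nmid k}}$ on the right-hand side does \emph{not} arise because ``for odd $k$ one of the $\mathcal{C}(0;\tau)$ factors in \eqref{Eqn: C_k} is left over after pairing.'' The left-hand side of the lemma contains no $\mathcal{C}(0;\tau)$ factors at all --- it is only $\phi_R(\bz;\tau)$ times the denominator products $(\zeta^{\pm j}q)_\infty$ --- and in any case $\mathcal{C}(0;\tau)=(q)_\infty^{-1}$ by \eqref{Eqn: crank product}, so leftover such factors would contribute a \emph{negative} power of $(q)_\infty$. (It is true that $\mathcal{A}_k(z;\tau)=(q)_\infty^{\delta_{2\nmid k}}\big/\prod_j(\zeta^{\pm a_j}q)_\infty$ after cancelling the crank numerators against the $\mathcal{C}(0;\tau)$ powers, but that numerator is precisely what the lemma's left-hand side excludes; you are conflating the numerator of $\mathcal{A}_k$ with the factor that must emerge from the denominator-side computation.) In the proof itself, the $(q)_\infty^{\delta_{2\nmid k}}$ has to be produced by the left-hand side after setting $\zeta=\zeta_\ell$: the $(q;q)_\infty$-type factors come from the Jacobi triple product expansions of the theta functions inside $\phi_R$ (net of its $\eta$-denominators) and from any Pochhammer factors whose exponents happen to be divisible by $\ell$; each of the $t$ complete residue systems absorbs one such factor to form $(q^\ell;q^\ell)_\infty$ via $\prod_{c=0}^{\ell-1}\left(1-\zeta_\ell^c x\right)=1-x^\ell$, and the content of the lemma is that exactly $\delta_{2\nmid k}$ of them remain unabsorbed. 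If you run the bookkeeping believing the parity factor comes from the crank numerators, the count of zero-residue slots will not close and your congruence will be off by a power of $(q)_\infty$. Apart from this misattribution, deferring the remaining verifications to the machinery of \cite{RolenTrippWagner2020} is exactly what the paper does.
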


Unfortunately, Lemma \ref{denomfactors1} does not appear to hold if $k \equiv -14 \pmod{\ell}$ and $k$ is odd. However, we have the following analogous result that provides a combinatorial description in this case using $\mathcal{B}_k$. 

\begin{Lemma}\label{denomfactors2}
	Suppose that $\{\ell n+\delta_{k,\ell}\}_{n\in\N_0}$ is an arithmetic progression coming from Theorem~\ref{Thm: k-colored congruences} for $p_k(n)$ with $k + h = \ell t$ and $h \in \{6,14\}$ and $k$ odd. Then if $\phi_R(\bz;\tau)$ is the theta block associated to $h$ in \cite[Table 1]{RolenTrippWagner2020}, then there is a choice of $a, b \in \Z$ such that
	\begin{multline*}%\label{denom2}
		\phi_R(\bz;\tau) \left(\zeta^{\pm 2} q\right)_\infty\left(\zeta^{\pm 3} q\right)_\infty\left(\zeta^{\pm 5} q\right)_\infty \cdot \ldots \cdot \left(\zeta^{\pm \frac{k+3}{2}}q\right)_\infty \left(\zeta^{\pm \frac{k+5}{2}}q\right)_\infty 
		\\\equiv q^{\frac{h}{24}}(q)_\infty f(\zeta) \left(q^{\ell};q^{\ell}\right)_\infty^t \pmod{\Phi_\ell(\zeta)}
	\end{multline*} 
	for some $f(\zeta) \in \Q[\zeta^{\frac 12}, \zeta^{-\frac 12}]$.
\end{Lemma}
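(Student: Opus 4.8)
The plan is to adapt the theta-block argument of \cite{RolenTrippWagner2020} (their Lemmas 4.1 and 4.2) to the modified exponent multiset appearing in $\mathcal{B}_k$. Recall that $\left(\zeta^{\pm a}q\right)_\infty = \left(\zeta^a q;q\right)_\infty\left(\zeta^{-a}q;q\right)_\infty$ is, up to a power of $(q)_\infty$, the denominator of $\mathcal{C}(az;\tau)$, so the left-hand side is the product of the crank denominators defining $\mathcal{B}_k$ multiplied by the theta block $\phi_R(\bz;\tau)$ associated to $h$. First I would write $\phi_R$ explicitly from \cite[Table~1]{RolenTrippWagner2020} with $\bz=(az,bz)$ and expand each of its Jacobi theta factors $\vartheta(cz;\tau)$ in product form. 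Each such factor contributes a piece $\left(\zeta^{\pm c}q\right)_\infty$, a power of $(q)_\infty$, a fractional $q$-power, and a theta-nullwert prefactor $\zeta^{\frac{c}{2}}-\zeta^{-\frac{c}{2}}$; the last of these is precisely what produces the half-integer powers in $f(\zeta)\in\Q[\zeta^{\frac12},\zeta^{-\frac12}]$.

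The heart of the argument is the reduction modulo $\Phi_\ell(\zeta)$, that is, the specialization $\zeta\mapsto\zeta_\ell$ (legitimate since $\Phi_\ell$ is irreducible, so the congruence is equivalent to equality at $\zeta_\ell$ as power series). Here I would use the elementary identity
\[
\prod_{j=0}^{\ell-1}\left(\zeta_\ell^j q;q\right)_\infty = \left(q^\ell;q^\ell\right)_\infty,
\]
which follows at once from $\prod_{j=0}^{\ell-1}\left(1-\zeta_\ell^j x\right)=1-x^\ell$. The decisive point is to choose the integers $a,b$ so that, after reduction at $\zeta_\ell$, the full collection of exponents — the set $\{\pm2,\pm3,\pm5,\ldots,\pm\frac{k+5}{2}\}$ coming from the crank factors together with the exponents contributed by $\phi_R$ — partitions into $t$ complete residue systems modulo $\ell$. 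Each complete system collapses via the identity above to one factor of $\left(q^\ell;q^\ell\right)_\infty$, accounting for $\left(q^\ell;q^\ell\right)_\infty^t$; the $\eta$-quotient part of $\phi_R$ supplies $(q)_\infty$, its fractional $q$-power supplies $q^{\frac{h}{24}}$, and at $\zeta_\ell$ the $q$-dependence separates off so that the theta-nullwert prefactors collect into the Laurent polynomial $f(\zeta)$.

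The main obstacle is precisely this residue count in the case $h=14$ with $k$ odd, which is the case where the consecutive exponent set $\{2,3,\ldots,\frac{k+3}{2}\}$ used for $\mathcal{A}_k$ in Lemma~\ref{denomfactors1} fails to align modulo $\ell$ (this is the content of the remark preceding the present lemma). Using $k+14=\ell t$, hence $k\equiv-14\pmod{\ell}$, I would verify that omitting the exponent $4$ and adjoining $\frac{k+5}{2}$ — the defining difference between $\mathcal{B}_k$ and $\mathcal{A}_k$ — shifts the exponent multiset so that, for a suitable pair $(a,b)$, the residues realign into complete systems modulo $\ell$ and the theta-block/Macdonald-identity machinery of \cite{RolenTrippWagner2020} carries through. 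The case $h=6$ is handled in the same way but is easier, since it already falls under Lemma~\ref{denomfactors1}; one simply checks that the relevant theta block from \cite[Table~1]{RolenTrippWagner2020} together with the modified exponent set yields the claimed power $t$ and a residual $f(\zeta)\in\Q[\zeta^{\frac12},\zeta^{-\frac12}]$.
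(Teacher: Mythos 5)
Your proposal is correct and takes essentially the same approach as the paper: the paper itself only sketches this lemma as a direct adaptation of Lemmas 4.1 and 4.2 of \cite{RolenTrippWagner2020}, noting that the missing residues now depend on $k$ and are filled in by the choice of $a,b$ --- which is precisely the mechanism (triple-product expansion of the theta factors, specialization at $\zeta_\ell$, grouping the exponent multiset into $t$ complete residue systems collapsing to $\left(q^{\ell};q^{\ell}\right)_\infty^t$) that you describe. If anything, your write-up makes the residue-counting bookkeeping more explicit than the paper's sketch does.
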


One difference between the above lemmas and those in \cite{RolenTrippWagner2020} is that the missing residues in our case may depend on the value of $k$. However, they are still easy to determine and can be filled in by an appropriate choice of $a$ and $b$ as was done in \cite{RolenTrippWagner2020}. Following the proof of \cite[Theorem 1.3]{RolenTrippWagner2020}, we have the following result.

\begin{corollary}\label{Cor: theta blocks}
	If $\{\ell n + \delta_{k, \ell}\}_{n \in \N_0}$ is an arithmetic progression coming from Theorem~\ref{Thm: k-colored congruences} with $k + h = \ell t$ for $h \in \{4,6,8,10,14\}$ if $k$ is even and $h \in \{4,6,8,10\}$ if $k$ is odd, then
	\begin{equation*}
		\Phi_\ell(\zeta) \mid \left[q^{\ell n + \delta_{k, \ell}}\right]\mathcal{A}_k(z;\tau).
	\end{equation*}
	Similarly, if $\{\ell n + \delta_{k, \ell}\}_{n \in \N_0}$ is an arithmetic progression coming from Theorem~\ref{Thm: k-colored congruences} with $k + h = \ell t$, $h \in \{6,14\}$, and $k\geq 7$ odd, then 
	\begin{equation*}
		\Phi_\ell(\zeta) \mid \left[q^{\ell n + \delta_{k, \ell}}\right]\mathcal{B}_k(z;\tau).
	\end{equation*}
\end{corollary}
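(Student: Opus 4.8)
The plan is to reduce the divisibility assertion to Lemmas~\ref{denomfactors1} and~\ref{denomfactors2}, combined with the root-of-unity criterion of Lemma~\ref{Lem:equidistribution}, following the argument of \cite[Theorem 1.3]{RolenTrippWagner2020}. First I would put $\mathcal{A}_k$ and $\mathcal{B}_k$ into a normalized product form. Using $\mathcal{C}(0;\tau)=(q)_\infty^{-1}$ and $\mathcal{C}(az;\tau)=(q)_\infty/(\zeta^{\pm a}q)_\infty$ together with \eqref{Eqn: C_k}, the powers of $(q)_\infty$ collapse (the exponent is $-\frac{k-\delta_{2\nmid k}}{2}+\frac{k+\delta_{2\nmid k}}{2}=\delta_{2\nmid k}$), and one finds
\begin{equation*}
	\mathcal{A}_k(z;\tau)=(q)_\infty^{\delta_{2\nmid k}}\prod_{a=2}^{\frac{k+\delta_{2\nmid k}}{2}+1}\frac{1}{(\zeta^{\pm a}q)_\infty},\qquad \mathcal{B}_k(z;\tau)=(q)_\infty^{\delta_{2\nmid k}}\prod_{a\in S}\frac{1}{(\zeta^{\pm a}q)_\infty},
\end{equation*}
where $S=\{2,3,5,6,\ldots,\frac{k+5}{2}\}$ is the index set appearing in \eqref{AkBkDefn}. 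In particular the denominators are exactly the products of factors $(\zeta^{\pm a}q)_\infty$ occurring on the left-hand sides of the two lemmas.

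Next I would multiply and divide by the theta block $\phi_R(\bz;\tau)$ attached to $h$, with $\bz=(az,bz)$ chosen as in Lemma~\ref{denomfactors1} (resp.~\ref{denomfactors2}). Solving the product form above for $\prod_a(\zeta^{\pm a}q)_\infty$ and substituting it into the relevant lemma, then cancelling the harmless unit $(q)_\infty^{\delta_{2\nmid k}}\in\Q[[q]]$, yields the congruence
\begin{equation*}
	\mathcal{A}_k(z;\tau)\equiv\frac{\phi_R(\bz;\tau)}{q^{\frac{h}{24}}\,f(\zeta)\,(q^{\ell};q^{\ell})_\infty^{t}}\pmod{\Phi_\ell(\zeta)}
\end{equation*}
in $\Q[[q]][\zeta,\zeta^{-1}]$, and identically for $\mathcal{B}_k$. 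Here division by $f(\zeta)$ is legitimate modulo $\Phi_\ell(\zeta)$ because the residues filled in by the choice of $a,b$ guarantee $f(\zeta_\ell)\neq 0$, so that $f(\zeta)$ is a unit in $\Q[\zeta,\zeta^{-1}]/(\Phi_\ell(\zeta))$.

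The decisive step is to read off the $q$-support of the right-hand side modulo $\Phi_\ell(\zeta)$. The factor $(q^{\ell};q^{\ell})_\infty^{-t}$ is a power series in $q^{\ell}$ and so contributes only exponents $\equiv 0\pmod{\ell}$, while $f(\zeta)$ is independent of $q$. Applying the Macdonald (Weyl denominator) identity to $\phi_R$, exactly as in \cite{RolenTrippWagner2020}, rewrites the theta block as a lattice sum whose $q$-exponents, once the normalizing factor $q^{h/24}$ is removed, all lie in a single residue class modulo $\ell$; the theta block attached to each admissible $h\in\{4,6,8,10,14\}$ (resp.\ $\{6,14\}$) is designed precisely so that this class avoids $\delta_{k,\ell}$. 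Consequently $[q^{\ell n+\delta_{k,\ell}}]$ of the right-hand side vanishes in $\Q[\zeta,\zeta^{-1}]/(\Phi_\ell(\zeta))$ for every $n$, whence $\Phi_\ell(\zeta)\mid[q^{\ell n+\delta_{k,\ell}}]\mathcal{A}_k(z;\tau)$, and likewise for $\mathcal{B}_k$, by Lemma~\ref{Lem:equidistribution}.

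The main obstacle is this last step: verifying that the Macdonald expansion of $\phi_R$ places the surviving $q$-exponents outside the class $\delta_{k,\ell}\pmod{\ell}$. This is where the special eta-power structure behind the values $h\in\{4,6,8,10,14\}$ enters, and where the bookkeeping differs slightly from \cite{RolenTrippWagner2020}, since the residues supplied by the auxiliary factors $(\zeta^{\pm a}q)_\infty$ now depend on $k$. One must therefore check that the choices of $a$ and $b$ in Lemmas~\ref{denomfactors1} and~\ref{denomfactors2} fill in the missing residues uniformly in $k$ and $\ell$; once this matching is confirmed, the computation is identical to that of \cite[Theorem 1.3]{RolenTrippWagner2020}.
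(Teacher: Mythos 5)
Your overall route is the same as the paper's: the paper proves Corollary~\ref{Cor: theta blocks} precisely by combining Lemmas~\ref{denomfactors1} and~\ref{denomfactors2} with the argument of \cite[Theorem 1.3]{RolenTrippWagner2020}, and your first two paragraphs (the product forms of $\mathcal{A}_k$ and $\mathcal{B}_k$ coming from \eqref{Eqn: C_k} and \eqref{AkBkDefn}, multiplication by $\phi_R(\bz;\tau)$, substitution of the lemmas, cancellation of the unit $(q)_\infty^{\delta_{2\nmid k}}$, and inversion of $f(\zeta)$ modulo $\Phi_\ell(\zeta)$) reconstruct that reduction correctly. You are also right to flag that $f(\zeta_\ell)\neq 0$ is needed; this is implicit in the lemmas' choice of $a$ and $b$, since otherwise their conclusions would be useless here. (One small point: the closing appeal to Lemma~\ref{Lem:equidistribution} is superfluous; once the coefficient of $q^{\ell n+\delta_{k,\ell}}$ on the right-hand side vanishes modulo $\Phi_\ell(\zeta)$, the divisibility of $[q^{\ell n+\delta_{k,\ell}}]\mathcal{A}_k(z;\tau)$ follows directly from the congruence.)

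The substantive problem is your description of the decisive step, which as literally stated would fail. It is not true that the Macdonald expansion of $q^{-\frac{h}{24}}\phi_R(\bz;\tau)$ has $q$-exponents confined to a single residue class modulo $\ell$, nor that the class $\delta_{k,\ell}$ is simply absent from its support. Unwinding $24\delta_{k,\ell}\equiv k\pmod{\ell}$ and $k+h=\ell t$, an exponent $N\equiv\delta_{k,\ell}\pmod{\ell}$ of $q^{-\frac{h}{24}}\phi_R$ corresponds to an exponent $E=N+\frac{h}{24}$ of $\phi_R$ with $\ell\mid 24E$, and such terms do occur. Already in the simplest ($A_1$) Macdonald identity
\begin{equation*}
	\eta(\tau)^3=\sum_{n\ge 0}(-1)^n(2n+1)\,q^{\frac{(2n+1)^2}{8}},
\end{equation*}
the exponents with $\ell\mid 2n+1$ satisfy $\ell\mid 24\cdot\frac{(2n+1)^2}{8}=3(2n+1)^2$, so the ``forbidden'' class is hit infinitely often; the identity nevertheless produces congruences because the coefficient $2n+1$ is then divisible by $\ell$. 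The same is true for the two-variable theta blocks of \cite{RolenTrippWagner2020}: lattice points with $\ell\mid 24E$ exist, and what the proof of \cite[Theorem 1.3]{RolenTrippWagner2020} actually establishes is \emph{cancellation}, not avoidance --- the terms contributing to such exponents carry signed powers of $\zeta$ that sum to a multiple of $\Phi_\ell(\zeta)$. Since you defer exactly this verification to \cite{RolenTrippWagner2020}, your plan can be repaired into a correct proof, but the claim to be imported is ``every coefficient of a $q$-power in the class $\delta_{k,\ell}\pmod{\ell}$ is divisible by $\Phi_\ell(\zeta)$,'' not ``the support avoids that class''; conflating the two hides the actual content of the step you call decisive.
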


Using this result, we are able to prove Theorem~\ref{Thm: Stanton for colored partitions}.

\begin{proof}[Proof of Theorem~\ref{Thm: Stanton for colored partitions}]
To illustrate the symmetry of our polynomials, recall that $\text{crank}_n(\zeta)$ is symmetric. Thus, the coefficients of $\mathcal{A}_k(z;\tau)$ and $\mathcal{B}_k(z;\tau)$ are products and sums of symmetric polynomials, so they must be symmetric as well.  Additionally, the unimodality of the coefficients is given as an assumption, while the divisibility of the coefficients is given by Corollary~\ref{Cor: theta blocks}. As a result, the proof is finished by Lemma~\ref{Lem: unimodality}.
\end{proof}

\section{Numerical evidence of conjectures and ideas for further work}\label{Section: examples}

\subsection{Computations}\label{Subsection: computational}

The computational evidence supporting Conjecture~\ref{Conj: Unimodality of A and B} was found initially through an exhaustive search of crank generating functions to find likely eventually unimodal examples. The search space is given by
\begin{equation*}
	\mathcal{S} := \bigcup_{3 \leq k \leq 11} \left\{ \mathcal{C}_k\left(a_1,a_2,\dots,a_{\frac{k+\delta_{2\nmid k}}{2}}; z; \tau\right) : k \geq a_1 > a_2 > \dots > a_{\frac{k+\delta_{2\nmid k}}{2}} > 0\right\}.
\end{equation*}
Note that the space is unrestricted by any consideration of the cranks' ability to explain $k$-colored partition congruences.

For each crank generating function $\mathcal{D} \in \mathcal{S}$, the minimum value of $m$ such that $[q^n]\mathcal{D}$ is unimodal for all $m < n < 75$ was found, with $\mathcal{D}$ being considered a likely eventually unimodal candidate if such an $m$ exists. The entire search was completed in approximately 56 hours on a single-threaded Intel i7-8750H CPU. The search space for larger $k$ increases exponentially since the number of possible cranks is given by the $\lfloor \frac k2 \rfloor$-th central binomial coefficient, with the total computation runtime then being in $O(2^k)$.

The set of all likely eventually unimodal examples was then searched manually for general trends and potential infinite families, with $\mathcal{A}_k$ and $\mathcal{B}_k$ emerging as the families of choice due to their determined unimodality and ability to explain almost all known $k$-colored partition congruences. Conjecture~\ref{Conj: Unimodality of A and B} was then formulated and more extensively verified for all $3 \leq k \leq 20$ and the respective $n \leq 99$; this verification took approximately 4 hours on the same hardware.

Evaluating the set of likely eventually unimodal examples also led to the following general conjecture. See Table~\ref{Table: cranks} for a partial summary of our computations.

{\small

\begin{table}[ht]
	\centering
	\subfloat[$k=3$]{
		\begin{tabular}{|c|c|}
			\hline
			Crank & Unimodal? \\
			\hline
			$\mathcal{C}_3(2, 1; z; \tau)$ & $\forall n > 7$ \\
			$\mathcal{C}_3(3, 1; z; \tau)$ & no \\
			$\mathcal{C}_3(3, 2; z; \tau)$ & $\forall n > 6$ \\
			\hline
	\end{tabular}}
	\qquad 
	\subfloat[$k=4$]{
		\begin{tabular}{|c|c|}
			\hline
			Crank & Unimodal? \\
			\hline
			$\mathcal{C}_4(2, 1; z; \tau)$ & $\forall n > 1$ \\
			$\mathcal{C}_4(3, 1; z; \tau)$ & no \\
			$\mathcal{C}_4(4, 1; z; \tau)$ & no \\
			$\mathcal{C}_4(3, 2; z; \tau)$ & $\forall n > 1$ \\
			$\mathcal{C}_4(4, 2; z; \tau)$ & no \\
			$\mathcal{C}_4(4, 3; z; \tau)$ & $\forall n > 23$ \\
			\hline
	\end{tabular}}
	\qquad
	\subfloat[$k=5$] {
		\begin{tabular}{|c|c|}
			\hline
			Crank & Unimodal? \\
			\hline
			$\mathcal{C}_5(3, 2, 1; z; \tau)$ & $\forall n > 9$ \\
			$\mathcal{C}_5(4, 2, 1; z; \tau)$ & no \\
			$\mathcal{C}_5(5, 2, 1; z; \tau)$ & no \\
			$\mathcal{C}_5(4, 3, 1; z; \tau)$ & $\forall n > 11$ \\
			$\mathcal{C}_5(5, 3, 1; z; \tau)$ & no \\
			$\mathcal{C}_5(5, 4, 1; z; \tau)$ & $\forall n > 9$ \\
			$\mathcal{C}_5(4, 3, 2; z; \tau)$ & $\forall n > 10$ \\
			$\mathcal{C}_5(5, 3, 2; z; \tau)$ & no \\
			$\mathcal{C}_5(5, 4, 2; z; \tau)$ & $\forall n > 13$ \\
			$\mathcal{C}_5(5, 4, 3; z; \tau)$ & $\forall n > 13$ \\
			\hline
	\end{tabular}}
	\qquad
	\subfloat[$k = 6$]{
		\begin{tabular}{|c|c|}
			\hline
			Crank & Unimodal? \\
			\hline
			$\mathcal{C}_6(3, 2, 1; z; \tau)$ & $\forall n > 1$ \\
			$\mathcal{C}_6(4, 2, 1; z; \tau)$ & no \\
			$\mathcal{C}_6(5, 2, 1; z; \tau)$ & no \\
			$\mathcal{C}_6(6, 2, 1; z; \tau)$ & no \\
			$\mathcal{C}_6(4, 3, 1; z; \tau)$ & $\forall n > 5$ \\
			$\mathcal{C}_6(5, 3, 1; z; \tau)$ & no \\
			$\mathcal{C}_6(6, 3, 1; z; \tau)$ & no \\
			$\mathcal{C}_6(5, 4, 1; z; \tau)$ & $\forall n > 11$ \\
			$\mathcal{C}_6(6, 4, 1; z; \tau)$ & no \\
			$\mathcal{C}_6(6, 5, 1; z; \tau)$ & $\forall n > 21$ \\
			$\mathcal{C}_6(4, 3, 2; z; \tau)$ & $\forall n > 14$ \\
			$\mathcal{C}_6(5, 3, 2; z; \tau)$ & no \\
			$\mathcal{C}_6(6, 3, 2; z; \tau)$ & no \\
			$\mathcal{C}_6(5, 4, 2; z; \tau)$ & $\forall n > 19$ \\
			$\mathcal{C}_6(6, 4, 2; z; \tau)$ & no \\
			$\mathcal{C}_6(6, 5, 2; z; \tau)$ & $\forall n > 20$ \\
			$\mathcal{C}_6(5, 4, 3; z; \tau)$ & $\forall n > 7$ \\
			$\mathcal{C}_6(6, 4, 3; z; \tau)$ & no \\
			$\mathcal{C}_6(6, 5, 3; z; \tau)$ & $\forall n > 32$ \\
			$\mathcal{C}_6(6, 5, 4; z; \tau)$ & $\forall n > 19$ \\
			\hline
	\end{tabular}}
	\caption{Cranks for the given value of $k$}
	\label{Table: cranks}
\end{table}

}

\newpage

\begin{Conj} \label{Conj: Unimodality over all cranks}
	Let $\mathcal{D}(z;\tau) := \mathcal{C}_k(a_1,a_2,\dots,a_{\frac{k+\delta_{2\nmid k}}{2}}; z; \tau)$ for some $a_1 > a_2 > \dots > a_{\frac{k+\delta_{2\nmid k}}{2}} > 0$ and $k \geq 3$. Then $\mathcal{D}(z;\tau)$ is eventually unimodal if and only if $a_1 - a_2 = 1$. 
\end{Conj}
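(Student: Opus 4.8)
The plan is to study the symmetric Laurent polynomial $f_n(\zeta):=[q^n]\mathcal{D}(z;\tau)$, whose top $\zeta$-degree is $a_1 n$, and to prove the two implications by rather different means. It helps to first rewrite, using $\mathcal{C}(z;\tau)=\frac{(q)_\infty}{(\zeta q)_\infty(\zeta^{-1}q)_\infty}$ and $\mathcal{C}(0;\tau)=(q)_\infty^{-1}$,
\[
\mathcal{D}(z;\tau)=(q)_\infty^{\delta_{2\nmid k}}\prod_{j=1}^{L}\frac{1}{(\zeta^{a_j}q)_\infty(\zeta^{-a_j}q)_\infty},\qquad L:=\frac{k+\delta_{2\nmid k}}{2},
\]
where $k\ge 3$ guarantees $L\ge 2$ and $a_1\ge 2$. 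A monomial of $f_n$ arises by choosing, in the $j$-th crank factor, crank data $(m_j,s_j)$ with $M(m_j,s_j)\neq 0$, together with a weight $s_0\ge 0$ from $(q)_\infty^{\delta_{2\nmid k}}$, subject to $s_0+\sum_j s_j=n$; its $\zeta$-exponent is $\sum_j a_j m_j$. Writing $e:=n-s_1$ and $d_1:=s_1-m_1\ge 0$, and using $|m_j|\le s_j$ together with $a_j\le a_2$ for $j\ge 2$, one obtains the degree bound
\[
\sum_j a_j m_j\ \le\ a_1 n-(a_1-a_2)\,e-a_1 d_1 .
\]

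For the \emph{only if} direction, suppose $a_1-a_2\ge 2$. Equality in the bound forces $e=d_1=0$, which pins down the unique top monomial $M(n,n)=1$, so $[\zeta^{a_1 n}]f_n=1$. Any contribution to $\zeta^{a_1 n-1}$ would need $(a_1-a_2)e+a_1 d_1\le 1$; since $a_1-a_2\ge 2$ and $a_1\ge 2$ this again forces $e=d_1=0$, i.e.\ the top monomial, whose degree is $a_1 n\neq a_1 n-1$. Hence $[\zeta^{a_1 n-1}]f_n=0$ for every $n$. A symmetric unimodal Laurent polynomial is non-increasing on non-negative exponents, so its two extreme coefficients satisfy $[\zeta^{a_1 n}]f_n\le[\zeta^{a_1 n-1}]f_n$; here this reads $1\le 0$, a contradiction whenever $a_1 n\ge 2$. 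Thus $f_n$ fails to be unimodal for all large $n$, which is the easy half.

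For the \emph{if} direction, assume $a_1-a_2=1$; here the work is to establish full unimodality. I would first control the top (edge) region. For fixed depth $i$, the bound shows that $[\zeta^{a_1 n-i}]f_n$ receives contributions only from configurations with $(a_1-a_2)e+a_1 d_1\le i$, hence with $e$ and $d_1$ bounded independently of $n$; since each relevant value $M(s-d,s)$ stabilises for $s\ge 2d$ by Lemma~\ref{Lem: constant crank}, the limit $T_i:=\lim_{n\to\infty}[\zeta^{a_1 n-i}]f_n$ exists. The hypothesis $a_1-a_2=1$ is exactly what lets one trade a unit of $q$-weight from the first factor (weight $a_1$, top crank) to the second (weight $a_2=a_1-1$, top crank) and lower the degree by precisely $1$: the configuration $s_1=n-i,\ m_1=n-i,\ s_2=i,\ m_2=i$ realises $\zeta^{a_1 n-i}$ with a contribution $M(n-i,n-i)M(i,i)=1$, so that no degree below the top is left empty. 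The genuine task is then to prove that the net limits $T_i$ are positive and non-decreasing, which amounts to identifying $\sum_i T_i x^i$ with an explicit partition-type series assembled from the single-factor top profiles and controlling the negative contributions coming from $M(s-1,s)=0$ and the $q^1$-term of $\mathcal{C}$.

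The remaining, and main, obstacle is \emph{bulk} unimodality: showing that for all large $n$ the coefficients of $f_n$ decrease monotonically from the centre out to the edge region treated above. This is an analytic statement. The product form exhibits $f_n$ as the $n$-th $q$-coefficient of a quotient of meromorphic Jacobi forms, and proving that $[\zeta^m q^n]\mathcal{D}$ is a unimodal function of $m$ for each large fixed $n$ requires asymptotics for these coefficients that are uniform in $m$ and sharp enough to survive taking successive differences. Such estimates — a circle-method or saddle-point analysis yielding a smooth, manifestly unimodal $\sech^2$- or Gaussian-type profile, in the spirit of Bringmann--Dousse \cite{BringmannDousse2016} and Dousse--Mertens \cite{DousseMertens2015} — are exactly the input unavailable for the rank in Conjecture~\ref{Conj: rank unimodality}. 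I expect this bulk analysis to be the central difficulty, which is why the statement is recorded here as a conjecture rather than a theorem.
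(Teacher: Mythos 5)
First, be aware that the paper contains no proof of this statement: it is recorded as Conjecture~\ref{Conj: Unimodality over all cranks}, supported only by the computations of Section~\ref{Subsection: computational} and Table~\ref{Table: cranks}, so there is no argument of the authors to compare yours against; your proposal must stand on its own. On those terms, your ``only if'' half is essentially correct and actually goes beyond what the paper establishes. The degree bound $\sum_j a_j m_j\le a_1 n-(a_1-a_2)e-a_1 d_1$ uses only that $[\zeta^m q^s]\mathcal{C}(z;\tau)=0$ for $|m|>s$ and that $a_j\le a_2$ for $j\ge 2$; the equality analysis gives $[\zeta^{a_1 n}]f_n=1$, and when $a_1-a_2\ge 2$ (hence $a_1\ge 3$) it forces $[\zeta^{a_1 n-1}]f_n=0$; and symmetry plus unimodality does force the coefficients to be non-increasing on non-negative exponents (symmetry turns the increasing run into a plateau), so $f_n$ fails to be unimodal for every $n\ge 1$. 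This unconditionally explains all of the ``no'' entries in Table~\ref{Table: cranks}, which the paper only observes numerically, and it deserves to be written up as a proposition. One slip to repair: the display you start from, with factors $\left((\zeta^{a_j}q)_\infty(\zeta^{-a_j}q)_\infty\right)^{-1}$ and prefactor $(q)_\infty^{\delta_{2\nmid k}}$, is not the decomposition your counting language uses, since the crank coefficients $M(m_j,s_j)$ are coefficients of $\mathcal{C}(a_jz;\tau)$, not of $\left((\zeta^{a_j}q)_\infty(\zeta^{-a_j}q)_\infty\right)^{-1}$. Run the argument directly from \eqref{Eqn: C_k}, where the extra factor $\mathcal{C}(0;\tau)^{\frac{k-\delta_{2\nmid k}}{2}}$ has non-negative coefficients, and note the anomaly $[q^1]\mathcal{C}(a_jz;\tau)=\zeta^{a_j}-1+\zeta^{-a_j}$ (it does not affect the two top coefficients, but it should be acknowledged).

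The genuine gap is the ``if'' direction, as you say yourself. Your edge analysis is plausible --- stabilization of $[\zeta^{a_1n-i}]f_n$ follows from Lemma~\ref{Lem: constant crank}-type arguments, and when $a_2=a_1-1$ the configuration you exhibit fills every degree near the top --- but you have not shown that the limits $T_i$ are non-negative, let alone non-decreasing in $i$, precisely because of the negative contributions you flag. More seriously, bulk unimodality for large fixed $n$ would require asymptotics for $[\zeta^m q^n]\mathcal{D}$ that are uniform in $m$ and sharp enough to survive differencing, in the spirit of \cite{BringmannDousse2016, DousseMertens2015}; this is exactly the ingredient that is unavailable even for the single rank function (Conjecture~\ref{Conj: rank unimodality}) and for $\mathcal{A}_k$ and $\mathcal{B}_k$ (Conjecture~\ref{Conj: Unimodality of A and B}), and the paper makes no claim to possess it. So what you have is an unconditional proof of the necessity of $a_1-a_2=1$ together with a program, not a proof, for sufficiency; the conjecture itself remains open.
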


\subsection{Questions and ideas for further research}

We conclude with open questions for further study.

\begin{enumerate}[leftmargin=*]
	\item Recalling the motivation for Conjecture~\ref{Conj: Stanton} given in Section~\ref{Subsection: Stanton's Conjectures}, are there combinatorial interpretations of the coefficients in the conjectures?
	\item Similarly, is there a combinatorial explanation of the non-negativity of the coefficients given in Theorem~\ref{Thm: Stanton for colored partitions}?
	\item Use Lemma~\ref{Lem:equidistribution} and Lemma~\ref{Lem: unimodality} (or modifications of them such as ones we have given) to prove non-negativity of coefficients for polynomials related to other families of partition functions, to congruences of the partition function modulo higher powers of primes, or to other combinatorial objects. 
	\item Prove or give partial results towards any of the unimodality conjectures given such as Conjecture~\ref{Conj: rank unimodality} and Conjecture~\ref{Conj: Unimodality of A and B}. 
\end{enumerate}

\end{document}